\newcommand{\N}{{\mathbb N}}
\newcommand{\R}{{\mathbb R}}
\newcommand{\Per}{\mathrm{Per}}
\newtheorem{theorem}{Theorem}[section]
\newtheorem{lemma}[theorem]{Lemma}
\newtheorem{proposition}[theorem]{Proposition}
\theoremstyle{definition}
\newtheorem{definition}[theorem]{Definition}
\newtheorem{remark}[theorem]{Remark}
\newcommand{\eps}{\varepsilon}
\begin{document}
 
\title{Nonlocal minimal clusters in the plane}

\author{Annalisa Cesaroni}
\address{Dipartimento di Scienze Statistiche,
Universit\`{a} di Padova, Via Cesare Battisti 241/243, 35121 Padova, Italy}
\email{annalisa.cesaroni@unipd.it}
\author{Matteo Novaga}
\address{Dipartimento di Matematica, Universit\`{a} di Pisa, Largo Bruno Pontecorvo 5, 56127 Pisa, Italy}
\email{matteo.novaga@unipi.it}

\subjclass{ 
49Q05 
58E12 
35R11
}
 \keywords{Fractional perimeters, clusters, minimal cones}
 
\begin{abstract} 
We prove existence of partitions of an open set $\Omega$ with a given number of  phases, which minimize  the sum of the fractional perimeters of all the phases, with Dirichlet boundary conditions.  
In two dimensions we show that, if  the fractional parameter $s$ is sufficiently close to $1$,   the only singular minimal cone, that is, the only minimal partition  invariant by dilations and with a singular point,  is given by three half-lines meeting at $120$  degrees. In the case of a weighted sum of  fractional perimeters, we show that there exists a unique minimal cone with three phases. 
\end{abstract} 

\maketitle 

\tableofcontents

\section{Introduction}
A $k$-cluster  is a family  $\mathcal{E}=(E_i)_{i=1,\dots k}$ of disjoint measurable  subsets of $\R^d$ such that $\cup_i E_i=\R^d$, up to a negligible set. 
We call each set $E_i$ a phase of the cluster. 
Following \cite{cm}, for an open set $\Omega\subset\R^d$ and $s\in (0,1)$ 
we  define the fractional perimeter of $\mathcal{E}$ 
relative to $\Omega$  as  
\begin{equation}\label{fun2} \mathcal{P}_s(\mathcal{E}; \Omega):=\sum_{1\leq i\leq k} \Per_s (E_i; \Omega),
\end{equation} 
where
\begin{eqnarray}\label{fractionalp}
 \Per_s(E;\Omega)&:=&J_s(E\cap \Omega, \R^d\setminus E)+J_s(\Omega\setminus E, E\setminus \Omega)
 \qquad \text{for}\ E\subset \R^d,
\\ \nonumber
J_s(A,B)&:=&\int_{A}\int_{B}\frac{1}{|x-y|^{d+s}}dxdy \qquad \text{for}\ A,B\subset \R^d,\ |A\cap B|=0.
\end{eqnarray}

The functional in \eqref{fun2} and more generally the weighted fractional perimeter
\begin{equation}\label{wp}
\mathcal{P}_{s, c}(\mathcal{E};\Omega):=\sum_{1\leq i\leq k} c_i\Per_s (E_i; \Omega),
\end{equation} 
with $c=(c_i)_i$ and $c_i>0$, are a natural generalization of the (weighted) classical perimeter of a cluster
\begin{equation}\label{wpc}\mathcal{P}_{c}(\mathcal{E};\Omega):=\sum_{1\leq i\leq k} c_i\Per (E_i; \Omega),
\end{equation}
  and arise  in the analysis of equilibria for a mixture of $k$  immiscible fluids in a container $\Omega$, where  the fluids tend to occupy disjoint regions  in such a way  to minimize the total surface tension measured through nonlocal interaction energies, rather then through surface area as in the classical case.  


In \cite{cm} the authors proved existence of fractional isoperimetric  
clusters. More precisely, they showed that there exists a minimizer of the energy \eqref{fun2} with $\Omega=\R^d$, among all $k$-clusters such that 
each phase has a prescribed volume.
They also established the regularity of such minimal clusters, showing that the singular set has  Hausdorff dimension less than $d-2$ (and it is discrete in the planar case $d=2$), that  outside from the singular set the boundary of the cluster is a hypersurface of class $C^{1,\alpha}$ 
for some $\alpha>0$, and finally that the blow-up of the cluster at a singular point is a minimal cone.  

In this short note we consider minimizers of \eqref{wp} in a bounded open set $\Omega\subset\R^d$, with
Dirichlet data.  More precisely, we fix the phases $E_i$ outside $\Omega$, that is, we fix exterior data
\begin{equation}\label{boundary} 
(\bar E_1, \bar E_2, \dots , \bar E_k)\qquad \bar E_i\subseteq \R^d\setminus \Omega, \forall i\quad \cup_i \bar E_i=\R^d\setminus \Omega,
\end{equation}
and we show existence of a solution to  the following Dirichlet problem \begin{equation}\label{dir2} 
\inf_{\{\mathcal{E},\  E_i\setminus \Omega=\bar E_i\}} \mathcal{P}_{s,c}(\mathcal{E};\Omega)
\end{equation} for $c=(c_i)_i$, with $c_i>0$. 

We are particularly interested in the analysis of singularities  in dimension $d=2$, in order to characterize fractional clusters in some basic cases. 
For instance, in Theorem \ref{cone} we consider the energy \eqref{fun2} and we show that for $s$ sufficiently close to $1$,   the only singular minimal cone consists of three half-lines meeting at $120$  degrees at a common end-point.  In particular, this implies that  the unique local minimizers for the fractional perimeter on $k$-clusters, for $s$ sufficiently close to $1$, are half-planes and such singular $3$-cones. 
We recall that, for $k=2$, half-planes are the unique local minimizers for any $s\in (0,1)$, as proved in \cite{adpm,crs} (see also \cite{c,p} for the extension to more general energies). 

To obtain our result,  we first provide  the $\Gamma$-convergence of the fractional perimeter of  a $k$-cluster to the classical perimeter as $s\to 1$,  which is a generalization of the analogous result proven in \cite{adpm,cv1} for $k=2$, and the Hausdorff convergence of minimizers which  is obtained  by exploiting the density estimates obtained in \cite{cm}.  We also show that this convergence can be improved outside the singular set.

Finally,  we consider the analogous problem  for   weighted fractional perimeters, restricted to  $3$-clusters. In Proposition \ref{cone2} we show that there exists a unique minimal $3$-cone, whose opening angles are uniquely determined in terms of the weights $c_i$. 

\subsection*{Acknowledgements}
The authors wish to thank Valerio Pagliari and Alessandra Pluda for useful discussions on the
topic of this paper. The authors are members of the INDAM/GNAMPA.
 
 \section{The Dirichlet problem} 
We start proving existence of minimizers of problem \eqref{dir2}, then we discuss the regularity of solutions, and finally the convergence  of the minimizers as $s\to 1 $ to the solution of the analogous Dirichlet problem for the classical perimeter. 

 \begin{theorem}\label{exth}
Let $\Omega\subseteq \R^d$ be an open bounded set of finite perimeter and fix  an exterior datum as in \eqref{boundary}.  Then,  there exists a solution to the Dirichlet problem \eqref{dir2}.

\end{theorem}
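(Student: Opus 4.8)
The plan is to use the direct method of the calculus of variations. First I would fix a minimizing sequence $\mathcal{E}^n = (E_1^n,\dots,E_k^n)$ of $k$-clusters, all satisfying the Dirichlet constraint $E_i^n \setminus \Omega = \bar E_i$, such that $\mathcal{P}_{s,c}(\mathcal{E}^n;\Omega) \to \inf$. Since the infimum is finite — one admissible competitor is obtained simply by taking $E_i$ to agree with $\bar E_i$ outside $\Omega$ and partitioning $\Omega$ arbitrarily among the phases, e.g. assigning all of $\Omega$ to a single phase, which has finite energy because $\Omega$ has finite perimeter and hence finite fractional perimeter — we may assume $\sup_n \mathcal{P}_{s,c}(\mathcal{E}^n;\Omega) =: M < \infty$. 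Because all $c_i > 0$, this gives a uniform bound $\sup_n \Per_s(E_i^n;\Omega) \le M/\min_i c_i$ for each $i$.

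Next I would extract a convergent subsequence. Each $E_i^n$ differs from the fixed set $\bar E_i \cup (\text{something in } \Omega)$, so up to the fixed exterior data the relevant variation happens inside the bounded set $\Omega$; writing $\chi_{E_i^n}$, the sequence is bounded in $L^1_{loc}$ and, by the uniform fractional perimeter bound together with the compact embedding of $W^{s,1}$ into $L^1$ on bounded sets (the fractional Sobolev / BV-type compactness used in \cite{cm}), a diagonal argument yields a subsequence with $\chi_{E_i^n} \to \chi_{E_i}$ in $L^1_{loc}(\R^d)$ and a.e., simultaneously for all $i=1,\dots,k$. The limit sets $E_i$ automatically satisfy $E_i \setminus \Omega = \bar E_i$ (the exterior data is unchanged along the sequence), are pairwise disjoint up to null sets, and cover $\R^d$ up to a null set — all three properties pass to the limit from a.e. convergence of the characteristic functions. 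Hence $\mathcal{E}=(E_i)_i$ is an admissible $k$-cluster for \eqref{dir2}.

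Finally I would invoke lower semicontinuity. The functional $E \mapsto \Per_s(E;\Omega)$ is lower semicontinuous with respect to $L^1_{loc}$ convergence: each of the building blocks $J_s(A,B)$ is, by Fatou's lemma applied to the nonnegative kernel $|x-y|^{-d-s}$, lower semicontinuous under a.e.\ convergence of $\chi_A,\chi_B$, and $\Per_s(E;\Omega)$ is a finite sum of such terms with the relevant sets ($E\cap\Omega$, $\R^d\setminus E$, $\Omega\setminus E$, $E\setminus\Omega$) all depending on $\chi_E$ in a way compatible with a.e.\ convergence. Summing over $i$ with the positive weights $c_i$ preserves lower semicontinuity, so
\begin{equation*}
\mathcal{P}_{s,c}(\mathcal{E};\Omega) \le \liminf_{n\to\infty} \mathcal{P}_{s,c}(\mathcal{E}^n;\Omega) = \inf_{\{\mathcal{E}',\ E_i'\setminus\Omega = \bar E_i\}} \mathcal{P}_{s,c}(\mathcal{E}';\Omega),
\end{equation*}
which forces $\mathcal{E}$ to be a minimizer. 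The main technical point to be careful about is the compactness step: one must ensure the fractional perimeter bound localized to $\Omega$ genuinely controls the $L^1$ oscillation of $\chi_{E_i^n}$ inside $\Omega$ (the cross terms $J_s(\Omega\setminus E, E\setminus\Omega)$ in \eqref{fractionalp} couple interior and exterior), so that compactness in $L^1_{loc}$ — rather than merely weak-$*$ measure convergence — is available; this is exactly where the finite perimeter of $\Omega$ and the structure of the fractional perimeter are used, following \cite{cm}.
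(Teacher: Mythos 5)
Your proposal is correct and follows essentially the same route as the paper: the direct method, with the finite infimum witnessed by the trivial competitor assigning all of $\Omega$ to one phase (finite since $\Omega$ is bounded with finite perimeter), compactness from the uniform Gagliardo-seminorm bound coming from the fractional perimeter, and lower semicontinuity (via Fatou, which is what underlies the cited semicontinuity of the Gagliardo norm). Your remark that the localized bound $J_s(E_i^n\cap\Omega,\Omega\setminus E_i^n)\le \Per_s(E_i^n;\Omega)$ is what drives the $L^1$ compactness is exactly the point the paper delegates to the references, so no gap remains.
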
  

\begin{proof} First  of all note that if we consider $\mathcal{E}$ defined  as follows:
$E_1= \Omega\cup \bar E_1$, $E_j=\bar E_j$ for $j\neq 1$, then 
we get $\mathcal{P}_s(\mathcal{E};\Omega)\leq  k \max c_i \Per_s(\Omega)<+\infty$ for all $j$, since $\Omega$ is bounded of finite perimeter (see \cite{cv1}).
The existence result is then obtained by the direct method of the calculus of variations, using the fact that $\Per_s(E)$ is a Gagliardo norm of $\chi_E$, recalling   that a uniform bound on the Gagliardo norm implies compactness in $L^2$, and that the norm is lower semicontinuous 
with respect to the $L^1$-convergence (see \cite{sv}). 
\end{proof} 

We recall the density estimates proved in \cite{cm}, which are uniform with respect to $s\to 1$.    

\begin{theorem}[Density estimates] \label{density}
Let $s_0\in (0,1)$,  and let $\mathcal{E}$ be a minimizer of \eqref{dir2} for some $s\in [s_0,1)$.    
Then there exist  $\sigma_0= \sigma_0(d,s_0,c), \sigma_1= \sigma_1(d,s_0,c)\in (0,1)$ such that,   if  $x\in \partial E_i\cap \Omega$ for some $i$, then 
\[ \sigma_0\omega_d r^d\leq|E_i\cap B(x, r)|\leq \sigma_1\omega_d r^d\qquad \forall r< d(x, \partial \Omega). \]
 \end{theorem}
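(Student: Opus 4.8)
The plan is to follow \cite{cm}, where these estimates are obtained for the volume-constrained isoperimetric problem in $\R^d$; the argument adapts to the Dirichlet problem \eqref{dir2} with only notational changes, and one has to check in addition that the constants can be chosen uniformly as $s\to1$. The key point is that each phase $E_i$ of a minimizer $\mathcal E$ is a quasi-minimizer of the fractional perimeter: there is $\Lambda=\Lambda(c)\ge1$, depending only on the weight ratios $\max_j c_j/\min_j c_j$ and on the number of phases (which $c$ records), such that $\Per_s(E_i;\Omega)\le\Lambda\,\Per_s(F;\Omega)$ whenever $F\setminus\Omega=\bar E_i$ and $F\triangle E_i\subset\subset B$ for some ball $B$ with $\overline B\subset\Omega$. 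To obtain this I would test the minimality of $\mathcal E$ against the competitor cluster in which $E_i$ is replaced by $F$ while $E_i\triangle F$ is redistributed among the remaining phases: the excess $F\setminus E_i$ is removed from the phases it meets, and the deficit $E_i\setminus F$ is reassigned to them in such a way that the increments stay controlled by the weights — for three phases it is enough to assign all of $E_i\setminus F$ to the phase $E_{j_0}$ ($j_0\ne i$) maximizing $J_s(E_i\setminus F,E_{j_0})$, so that this interaction is comparable to $J_s(E_i\setminus F,\R^d\setminus E_i)$, while for more phases a more careful partition of $E_i\setminus F$ is used, as in \cite{cm}. Expanding the resulting inequality by means of the elementary identities
\[
\begin{aligned}
\Per_s(A\cup H;\Omega)-\Per_s(A;\Omega)&=J_s(H,A^c\setminus H)-J_s(A,H),\\
\Per_s(A\setminus H;\Omega)-\Per_s(A;\Omega)&=J_s(A\setminus H,H)-J_s(H,A^c),
\end{aligned}
\]
valid for disjoint $A,H$ (respectively for $H\subset A$) with $H\subset\subset\Omega$, and using the symmetry and nonnegativity of $J_s$, one checks that the perimeter increments of the phases $j\ne i$ are absorbed into $\Lambda\,\Per_s(F;\Omega)$.

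Granting this, the two bounds are the standard density estimates for (quasi-)minimizers of the fractional perimeter, cf. \cite{crs}; I recall the mechanism. I would prove the upper bound first, since there no mass is added to the other phases and only the weight ratios enter: comparing $\mathcal E$ with the competitor that replaces $E_i$ by $E_i\cup B(x,r)$ and removes $B(x,r)$ from all the other phases, the identities above give an inequality of the form
\[
J_s\big(E_i,\,B(x,r)\setminus E_i\big)\le C(c)\,J_s\big(B(x,r)\setminus E_i,\,\R^d\setminus B(x,r)\big),
\]
from which $|B(x,r)\setminus E_i|\ge(1-\sigma_1)\omega_d r^d$ follows by the usual analysis. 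For the lower bound I would fix $x\in\partial E_i\cap\Omega$, set $m(r):=|E_i\cap B(x,r)|$, and apply the quasi-minimality with $F=E_i\setminus B(x,r)$; combining the removal identity with the fractional isoperimetric inequality $\Per_s(E_i\cap B(x,r))\ge c_{d,s}\,m(r)^{(d-s)/d}$ and with the standard estimates for the truncated interactions across $\partial B(x,r)$ — whose critical part is absorbed using the upper density bound just established — one reaches a closed differential inequality of the type $m(r)^{(d-s)/d}\le C\,r^{1-s}m'(r)$ for a.e. $r<d(x,\partial\Omega)$ (or a dyadic iteration of comparable strength); since $x\in\partial E_i$ forces $m(r)>0$, integrating (with $m(0^+)=0$) yields $m(r)\ge\sigma_0\omega_d r^d$.

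For the uniformity as $s\to1$ I would keep track of the $s$-dependence of every constant. The scaling $\Per_s(\lambda E)=\lambda^{d-s}\Per_s(E)$ normalizes each comparison to the unit scale; $\Lambda$ does not depend on $s$, while the $s$-dependent constants entering the second step — the fractional isoperimetric constant $c_{d,s}$, the constant in the bound of the truncated interactions, and the constant in the integration of the differential inequality — become, after multiplication by $(1-s)$, continuous functions of $s$ on $(0,1]$ that converge as $s\to1$ to the corresponding positive and finite constants for the classical perimeter (cf. \cite{adpm,cv1}). Hence they stay bounded, and bounded away from zero, for $s\in[s_0,1)$, and therefore $\sigma_0,\sigma_1$ can be taken depending only on $d,s_0,c$.

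I expect the main difficulty to be the interplay of the last two points: the redistribution bookkeeping for clusters with several phases — where the crude subadditivity $\Per_s(A\cup H)\le\Per_s(A)+\Per_s(H)$ is too wasteful, so one must cancel the internal interface terms carefully and check that every increment of a receiving phase is genuinely absorbed by a $\Lambda\,\Per_s(F;\Omega)$ depending only on the weights and on the number of phases — and, simultaneously, the uniform-in-$s$ control, since $\Per_s$ itself blows up like $(1-s)^{-1}$ as $s\to1$ and one has to check that this blow-up cancels exactly between the two sides of each comparison. Both are carried out in \cite{cm}.
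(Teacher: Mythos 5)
Your proposal is essentially the paper's own route: the paper proves the theorem by observing that a minimizer of the Dirichlet problem \eqref{dir2} is a $(\Lambda, d(x,\partial\Omega))$-minimizer for every $\Lambda>0$ and then invoking the infiltration lemma \cite[Lemma 3.4]{cm} with constants uniform in $s\ge s_0$, whose proof is precisely the scheme you reconstruct (comparison with competitors in which $E_i\triangle F$ is reassigned to the other phases, followed by the fractional isoperimetric inequality and the tail/iteration argument). Your uniformity bookkeeping --- the cancellation of the $(1-s)^{-1}$ blow-ups between the isoperimetric constant and the truncated-kernel estimates --- is exactly what the paper records when noting that $\sigma_0$ can be taken uniform as $s\to1$ (and degenerates only as $s\to0$, cf. Remark \ref{remdensity}).
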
 
 
 \begin{proof} 
 The proof can be obtained as a straightforward adaptation of the proof of Lemma 3.4, the infiltration lemma,  in \cite{cm}. 
 We note that if we fix $x\in \Omega$, then $\mathcal{E}$ is a $(\Lambda, d(x, \partial\Omega))$ minimizer for every $\Lambda>0$ and  observing  in the proof that  the constant $r_1$ can be chosen equal to $r_0$ and that  $\sigma_0$ is  uniform as $s\to 1$. 
 \end{proof}
 
 \begin{remark}\label{remdensity} 
 By inspecting the proof of \cite[Lemma 3.4]{cm}, we get that  this estimate degenerates  as $s\to 0$, in fact  $\lim_{s_0\to 0^+}\sigma_0(d,s_0)= 0$. 
 \end{remark} 
 
Let us fix a partition $\mathcal{E}$ and a point $x\in \partial \mathcal{E}$. 
The blow-up of $\mathcal{E}$  at $x$  is the cluster $\mathcal{E}_{x,r}$  defined by
\[E^{x,r}_i= \frac{E_i-x}{r}.\] 
We state the regularity  result in \cite[Theorem 3.3, Theorem 3.7]{cm}, adapted to our problem, with an improvement of the regularity given by the application of a bootstrap argument given in \cite{bfv}. We note that the same argument   also applies to   the isoperimetric clusters considered in \cite{cm}, 
and allows to improve the regularity of the boundary outside the singular set from $C^{1, \alpha}$ to $C^\infty$. 

We first  recall the definition of cone, and of regular and singular points. 

 \begin{definition}  A partition $\mathcal{C}$ is called a $k$-cone with vertex $x_0$ if  it  is invariant by dilatation, that is $\lambda (\mathcal{C}-x_0)=\mathcal{C}-x_0$ for every $\lambda>0$, and it has $k$-phases $C_1, \dots, C_k$. \end{definition}
\begin{definition}\label{regpo} Let $\mathcal{E}$ be a $k$-cluster. $x\in \partial\mathcal{E}$ is a regular point if there exist an half-space $H$ and  two indexes $i,j$, such that  as $r\to 0$
 \[E^{x,r}_i\to H, \quad E^{x,r}_j\to \R^d\setminus H,\quad E^{x,r}_h\to \emptyset \text{ for $h\neq i,j$},  \]
locally in $L^1(\R^d)$.  The set of regular points will be denoted by $\mathcal{R}(\mathcal{E})$, 
while the complementary set $\partial \mathcal{E}\setminus \mathcal{R}(\mathcal{E})$ will be called singular set.
\end{definition}

\begin{theorem}\label{thmsing}  
Let $\Omega\subseteq\R^d$  be an open set and $\mathcal{E}$ be a $k$-cluster which is a solution to the Dirichlet problem \eqref{dir2}, with a given boundary datum as in \eqref{boundary}. For every  $x\in \partial \mathcal{E}\cap \Omega$, there exist a $h$-cone $\mathcal{C}$, with $h\leq k$, 
and a sequence $r_j\to 0$,
such that
 
 \[
 \lim_{j\to +\infty}E_i^{x,r_j} = C_i \text{  in  $L^1_{loc}(\R^d)$ and locally uniformly, }\qquad \forall i=1,\dots, h.\]
Moreover, the singular set $(\partial \mathcal{E}\setminus \mathcal{R}(\mathcal{E}))\cap \Omega$ is  (relatively)  closed, of Hausdorff dimension less than $d-2$ and discrete if $d=2$. Finally for every $x\in  \mathcal{R}( \mathcal{E})\cap \Omega$ there is $r_x>0$ such that 
$\partial\mathcal{E}\cap B(x, r_x)$ is a $C^\infty$ hypersurface in $\R^d$.
\end{theorem}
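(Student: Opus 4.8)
The plan is to deduce the statement from the interior regularity theory of \cite{cm}, the only genuinely new ingredient being the $C^\infty$ upgrade of \cite{bfv}. The observation that makes \cite{cm} directly applicable is that a solution $\mathcal{E}$ of \eqref{dir2} is a \emph{local} minimizer of $\mathcal{P}_{s,c}(\,\cdot\,;\Omega)$ in the interior of $\Omega$: if $\mathcal{E}'$ coincides with $\mathcal{E}$ outside a ball $B(x,r)$ with $x\in\Omega$ and $r<d(x,\partial\Omega)$, then $\mathcal{E}'$ has the same exterior datum \eqref{boundary} and is admissible in \eqref{dir2}, so $\mathcal{E}$ minimizes $\mathcal{P}_{s,c}$ among all such competitors. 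In particular $\mathcal{E}$ is a $(\Lambda,d(x,\partial\Omega))$-minimizer of $\mathcal{P}_{s,c}$ for every $\Lambda>0$, which is precisely the setting in which the density estimates of Theorem \ref{density} hold.

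For the blow-up I would argue as follows. Local minimality in $B(x,r)$ yields a uniform bound on the Gagliardo seminorm of each $\chi_{E_i}$ on bounded sets, hence $L^1_{loc}$-compactness of the rescaled family $\{(E^{x,r}_i)_i\}_{r>0}$; along a sequence $r_j\to0$ one obtains $E^{x,r_j}_i\to C_i$ in $L^1_{loc}(\R^d)$, and by the $L^1$-lower semicontinuity of $\Per_s$ recalled in the proof of Theorem \ref{exth}, together with a standard cut-and-paste argument as in \cite{cm,crs}, the limit $\mathcal{C}=(C_i)_i$ is itself a local minimizer of $\mathcal{P}_{s,c}$. The monotonicity formula for the fractional perimeter (Caffarelli--Roquejoffre--Savin, through the Caffarelli--Silvestre extension), summed over the phases as in \cite{cm}, then forces $\mathcal{C}$ to be a cone with vertex at the origin. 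Phases of zero density at $x$ disappear in the limit, while by the lower bound in Theorem \ref{density} the surviving phases are precisely the $E_i$ with $x\in\partial E_i$, so $\mathcal{C}$ is an $h$-cone with $h\le k$. Finally, the two-sided density bounds prevent the boundaries from degenerating, so $\partial E^{x,r_j}_i\to\partial C_i$ in the local Hausdorff distance; combined with the $L^1_{loc}$-convergence this is the claimed locally uniform convergence.

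For the structure of $\partial\mathcal{E}$: if $x\in\mathcal{R}(\mathcal{E})$ then some blow-up of $\mathcal{E}$ at $x$ is a pair of complementary half-spaces with all other phases empty, so in a small ball $\mathcal{E}$ is $L^1$-close to that half-space configuration and only two phases are non-negligible there. Inserting this into the improvement-of-flatness ($\varepsilon$-regularity) theorem for nonlocal minimal surfaces of \cite{crs} --- after checking, as in \cite{cm}, that the nonlocal interactions created by the remaining small-measure phases enter the Euler--Lagrange equation only as a lower-order perturbation --- one gets that $\partial\mathcal{E}\cap B(x,r_x)$ is a $C^{1,\alpha}$ hypersurface for some $r_x>0$, and the bootstrap regularity of \cite{bfv} for the fractional-mean-curvature equation promotes it to $C^\infty$ (the same remark applies to the isoperimetric clusters of \cite{cm}). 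Since this $\varepsilon$-regularity statement is open, $\mathcal{R}(\mathcal{E})\cap\Omega$ is open and the singular set is relatively closed; the bound on its dimension and the discreteness in the plane then follow from Federer's dimension-reduction argument, exactly as in \cite{cm}: repeated blow-ups at singular points reduce the ambient dimension while keeping the dimension of the singular set above the ambient dimension minus two, and since a planar minimal cone is regular away from its vertex --- the only one-dimensional minimal cone being the half-line pair --- one reaches a contradiction unless $\dim\big((\partial\mathcal{E}\setminus\mathcal{R}(\mathcal{E}))\cap\Omega\big)\le d-2$, with no accumulation of singular points when $d=2$.

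The step I expect to be the real obstacle is the one hidden in the appeal to \cite{cm}: the reduction, near a regular point, of the cluster problem to the two-phase $s$-minimal surface problem --- i.e.\ showing that although the competing phases occupy a small volume in the relevant ball, the nonlocal cross-interactions they generate perturb the minimal-surface equation merely at lower order, so that the Caffarelli--Roquejoffre--Savin scheme can be run. Granting that, the rest is routine: the blow-up analysis is a verbatim adaptation, and the $C^\infty$ conclusion is a direct application of \cite{bfv}.
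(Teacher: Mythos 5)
Your overall outline (interior local minimality, blow-up and dimension reduction deferred to \cite{cm}, reduction to a two-phase problem near regular points, $C^{1,\alpha}$ by $\varepsilon$-regularity, $C^\infty$ by the bootstrap of \cite{bfv}) is the same as the paper's, which indeed presents the first two parts as direct adaptations of \cite{cm}. The genuine gap is exactly the step you flag as ``the real obstacle'' and leave open: how to handle the remaining phases near a regular point. Your plan --- keep them as sets of small measure in the ball and absorb their nonlocal cross-interactions as a ``lower-order perturbation'' of the Euler--Lagrange equation inside the improvement-of-flatness scheme of \cite{crs} --- is not what is needed and is problematic as stated: a small-measure phase touching the interface interacts through the singular kernel $|x-y|^{-d-s}$, so its contribution is not a priori of lower order, and for the $C^\infty$ bootstrap of \cite{bfv} one needs the forcing term in the fractional mean curvature equation to be smooth, which fails if the extra phases are merely small but nonempty arbitrarily close to $x$.

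The paper closes this step differently, and more simply, using the density estimates of Theorem \ref{density} (the infiltration lemma of \cite{cm}): since at a regular point the phases $E_h$, $h\neq i,j$, have vanishing density, the lower density bound forces $E_h\cap B(x,2r)=\emptyset$ for some $r>0$ --- they are empty, not just small. With the extra phases at distance at least $r$ from $B(x,r)$, an explicit competitor computation (swap $E_i$ and $E_j$ inside $B(x,r)$, estimate the cross term by $J_s(E_i\setminus F,\R^d\setminus B(x,2r))\leq C(r,s)\,|E_i\Delta F|$) shows that $E_i$ is a $\Lambda$-minimizer of the single-phase $\Per_s$ in $B(x,r)$ in the sense of \eqref{qm}; this puts one exactly in the setting of \cite[Theorem 3.3]{cm} and yields $C^{1,\alpha}$ regularity near $x$ without any new perturbation analysis of the \cite{crs} scheme. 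Moreover, the stationarity condition then reads $(c_i+c_j)H_s(x,E_i)=c_{ij}+2c_j\int_{\R^d\setminus(E_i\cup E_j)}|x-y|^{-d-s}\,dy$ in the viscosity sense, and precisely because $E_i\cup E_j$ fills $B(x,2r)$ the right-hand side is a smooth function of $x$, so \cite[Theorem 5]{bfv} applies and gives $C^\infty$. So the missing idea in your argument is to upgrade ``the other phases have small measure'' to ``the other phases vanish in a neighborhood'' via the infiltration property, and then to replace the perturbed-$\varepsilon$-regularity route by the $\Lambda$-minimality reduction; once this is done, the rest of your proposal matches the paper.
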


\begin{proof}
The first part of the statement about the convergence of the blow up can be obtained by a direct adaptation  of the proof of the analogous theorem  given in \cite[Theorem 3.7]{cm}. 
As for the dimension of the singular set,  it can be obtained exactly as in \cite[Theorem  3.13, Proposition 3.14]{cm}. 

Fix now $x\in  \mathcal{R}(\mathcal{E})\cap \Omega $. Proceeding as in \cite[Theorem 3.3]{cm},  by exploiting the definition of regular point and the density estimates, we get that there exist $r>0$ and 
 two indexes $i,j$ and such that $E_h\cap B(x,2r))=\emptyset$ for every $h\neq i,j$ and $B(x,2r)\subset \Omega$.  
 We observe that there exists $\Lambda>0$, depending on $c, r, s$, such that   $E_i$ is a $\Lambda$-minimizer for $\Per_s$ in $B(x,r)$  in the following sense:  \begin{equation}\label{qm}\Per_s(E_i; B(x,r))\leq \Per_s(F; B(x,r) +\Lambda |E_i\Delta F|\qquad \forall F\subseteq \R^d,\ F\Delta E_i\subseteq B(x,r).\end{equation}
 This property is easily checked using the fact that $\mathcal{E}$ is a solution to the Dirichlet problem. Indeed we define  the $k$-cluster 
$\mathcal{F}^{ij}$ in this way: $F^{ij}_i=(E_i\setminus B(x,r) )\cup (F\cap B(x,r))$, $F^{ij}_j=(E_j\setminus B(x,r))\cup (B(x,r)\setminus F)$ and $F^{ij}_h=E_h$ for all $h\neq i,j$. Note that $\mathcal{F}^{ij}$ satisfies the same boundary conditions as $\mathcal{E}$. Following  \cite[Theorem 3.3]{cm},
and recalling that $B(x, r)\setminus E_j= E_i\cap B(x,r)$ and that $E_h\cap B(x,2r)=\emptyset$ for all $h\neq i,j$,  
an easy computation gives
\begin{align*} 0\leq& \mathcal{P}_{s,c}(\mathcal{F}^{ij}; \Omega)-\mathcal{P}_{s,c}(\mathcal{E}; \Omega)= (c_i+c_j)\Per_s(F; B(x,r))- (c_i+c_j)\Per_s(E_i; B(x,r))\\ & +c_jJ_s(E_i\cap B(x,r), (\R^d\setminus (E_i\cup E_j))-c_jJ_s(F\cap B(x,r), (\R^d\setminus (E_i\cup E_j))\\
\leq &   (c_i+c_j)\Per_s(F; B(x,r))-  (c_i+c_j)\Per_s(E_i; B(x,r))+c_jJ_s(E_i\setminus F, \R^d\setminus B(x,2r))
\\
\leq &   (c_i+c_j)\left[\Per_s(F; B(x,r))-  \Per_s(E_i; B(x,r))+\frac{c_j}{c_i+c_j}\frac{d r^s\omega_d}{s}|E_i\Delta F|\right]. 
\end{align*}  

Using this minimality property we may conclude exactly as in \cite[Theorem 3.3]{cm} that, possibly reducing $r$, all the points in $\partial E_i\cap B(x,r)$  are regular and that  $\partial E_i\cap B(x,r)$ is a
$C^{1, \alpha}$ hypersurface, for some $\alpha$ depending on $s$.

Finally,  at the regular points of $\partial\mathcal{E}\cap \Omega$ we can write the Euler-Lagrange equation.  Let $x$ and $i,j$ as before. Then it is possible to show that there exists a constant $c_{ij}$ such that the stationarity condition at $x$ reads 
\[c_i H_s(x, E_i)- c_j H_s(x, E_j)=c_{ij}\] where, for $E\subseteq \R^d$ and $x\in \partial E$, $H_s(x,E)$ is the fractional curvature, defined as   
\begin{equation}\label{fcurv} 
H_s(x, E)=   \int_{\R^d } \frac{\chi_{\R^d\setminus E}(y)-\chi_{E}(y)}{|x-y|^{d+s}}dy. 
\end{equation}  Exploiting this definition, we obtain that the stationary condition can be written as the
following equation
\[(c_i+c_j) H_s(x, E_i)=c_{ij} +2c_j\int_{\R^d\setminus(E_i\cup E_j)} \frac{1}{|x-y|^{d+s}}dy, \]
which holds in the viscosity sense.  We note that if $y\in\R^d\setminus(E_i\cup E_j)$, then $|x-y|\geq 2r > 0$, so that the r.h.s. is a smooth function of $x$. We apply now the bootstrap argument in \cite[Theorem 5]{bfv}  to conclude that   $\partial E_i\cap B(x,r)$ is a
$C^\infty$ hypersurface. 
\end{proof}

We now recall a density result of polyhedral clusters with respect to the (weighted) local perimeter \eqref{wpc}, which has been obtained in \cite{bcg}. We 
shall adapt this result in order to apply it to Dirichlet problems. In particular, we will need the notion of transversality of a cluster, 
which ensures that the polyhedral approximations can be chosen also to fit with the exterior data, up to a small error.

\begin{definition}[Polyhedral clusters]  
A $k$-cluster $\mathcal{K}=(K_i)_{i=1,\dots, k}$ is polyhedral in an open set $\Omega$ if  for every phase $K_i$ there is a finite number of $(d-1)$-dimensional simplexes  
 $T_1, \dots, T_{r_i}\subseteq \R^d$  such that $\partial K_i$ coincides, up to a $\mathcal{H}^{d-1}$-null set, with   $\cup_jT_j\cap\Omega$. 
\end{definition}

\begin{definition} \label{trdef}
Let $\Omega$ be an open set of class $C^1$. For $\delta>0$ we define 
\[\Omega^\delta:=\{x\in\R^d \ :\ d(x, \Omega) < \delta\}\qquad \Omega_\delta:=\{x\in\Omega\ :\ d(x, \R^d\setminus\Omega)>\delta\}.\] 
We say that a measurable  set  $F$ is transversal to $\partial \Omega$ if 
\[\lim_{\delta\to 0^+}\Per(F; \Omega^\delta\setminus \Omega_\delta)=0.\] 
We say that  $F$ is transversal to $\partial \Omega^+$ if 
\[\lim_{\delta\to 0^+}\Per(F; \Omega^\delta\setminus \Omega)=0.\] 
A cluster is transversal to $\partial \Omega$ (resp. to $\partial \Omega^+$) if every phase is transversal. 
\end{definition}  
  
 \begin{theorem} \label{dens} Let $\Omega$ be a bounded open  set with $C^1$ boundary, and let $\mathcal{F}$ be a cluster in $\Omega$ such that every phase $F_i$ has finite perimeter in $\Omega$. 
For every $\eps>0$ there exists  a cluster $\mathcal{K}_\eps$  which is polyhedral in $\Omega$, such that $\mathcal{K}_\eps\to \mathcal{F}$ 
in $L^1(\Omega)$ and $\mathcal{P}_{c}(\mathcal{K}_\eps;\Omega)\to \mathcal{P}_{c}(\mathcal{F};\Omega)$. 

Assume moreover  that   $\mathcal{F}$  is polyhedral in $\R^d\setminus \Omega$ and transversal to $\partial \Omega^+$. Then for every $\eps>0$ there exists  a polyhedral cluster $\mathcal{K}_\eps$ with the following properties:
\begin{itemize}
\item[i)] $\mathcal{K}_\eps\to \mathcal{F}$ in $L^1(\Omega)$,\item[ii)]   $\mathcal{K}_\eps=\mathcal{F}$ in $\R^d\setminus \Omega$, \item[iii)] $\mathcal{K}_\eps$  is transversal to $\partial \Omega$, 
\item[iv)]   
$\mathcal{P}_{c}(\mathcal{K}_{\eps}; \Omega)\to \mathcal{P}_{c}(\mathcal{F}; \Omega)$ as $\eps\to 0.$ 
\end{itemize}
\end{theorem}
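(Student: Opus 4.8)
The plan is to deduce this from the known interior density result (the first sentence of Theorem~\ref{dens}, which is the polyhedral approximation theorem of \cite{bcg}) together with a localization-near-$\partial\Omega$ argument that uses transversality to glue the interior approximation to the fixed exterior datum. First I would treat the interior statement: since each $F_i$ has finite perimeter in $\Omega$, this is exactly \cite{bcg} and nothing needs to be added beyond recalling it. The real content is the second half, where $\mathcal{F}$ is polyhedral outside $\Omega$ and transversal to $\partial\Omega^+$, and we must produce $\mathcal{K}_\eps$ that (ii) agrees with $\mathcal{F}$ outside $\Omega$, (iii) is transversal to $\partial\Omega$, (iv) has converging perimeters, and (i) converges in $L^1(\Omega)$.

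The strategy I would use is a two-region decomposition of $\Omega$ controlled by a small parameter $\delta>0$. Work in the collar $\Omega^\delta\setminus\Omega_\delta$, where by transversality to $\partial\Omega^+$ and the finite-perimeter hypothesis one has $\Per(F_i;\Omega^\delta\setminus\Omega_{\delta})\to 0$ as $\delta\to 0$ for every $i$. Inside the interior core $\Omega_{2\delta}$, apply \cite{bcg} to get a polyhedral cluster $\mathcal{K}^{int}$ with $\mathcal{K}^{int}\to\mathcal{F}$ in $L^1(\Omega_{2\delta})$ and $\mathcal{P}_c(\mathcal{K}^{int};\Omega_{2\delta})\to\mathcal{P}_c(\mathcal{F};\Omega_{2\delta})$. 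Since $\partial\Omega$ is $C^1$, the collar $\Omega^\delta\setminus\Omega_{\delta}$ is diffeomorphic to $\partial\Omega\times(-\delta,\delta)$ via the nearest-point projection; pulling back the exterior polyhedral structure of $\mathcal{F}$ on $\partial\Omega^+$ along this collar, and then flattening the diffeomorphism piece-by-piece on a fine triangulation of $\partial\Omega$, produces a polyhedral cluster in the collar that matches $\mathcal{F}$ on $\partial\Omega^\delta$ (hence outside $\Omega$) and whose perimeter inside the collar is bounded by a quantity that tends to $0$ with $\delta$ (using transversality to $\partial\Omega^+$ plus the smallness of the collar). One then defines $\mathcal{K}_\eps$ by taking $\mathcal{K}^{int}$ on $\Omega_{2\delta}$, the collar construction on $\Omega^\delta\setminus\Omega_{2\delta}$, and $\mathcal{F}$ on $\R^d\setminus\Omega$, interpolating between the two polyhedral clusters across $\partial\Omega_{2\delta}$ on a thin polyhedral shell. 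Choosing $\delta=\delta(\eps)\to 0$ slowly and then the accuracy of $\mathcal{K}^{int}$ fine enough gives (i), (ii) and (iv); property (iii), transversality of $\mathcal{K}_\eps$ to $\partial\Omega$, holds because $\partial\mathcal{K}_\eps\cap(\Omega^\delta\setminus\Omega_\delta)$ is a finite union of simplexes meeting $\partial\Omega$ transversally by construction, so its perimeter in $\Omega^{\delta'}\setminus\Omega_{\delta'}$ goes to $0$ as $\delta'\to 0$.

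The main obstacle is the gluing step: ensuring that the polyhedral approximation in the interior and the (flattened) exterior polyhedral datum can be joined along $\partial\Omega_{2\delta}$ by a polyhedral cluster whose boundary in the thin transition shell has $\mathcal{H}^{d-1}$-measure going to $0$, while still keeping all $k$ phases a genuine partition. In $d=2$ this is elementary (one matches finitely many boundary points on two nearby curves by short segments), and for general $d$ it requires a careful simultaneous triangulation compatible with both clusters; I would handle it by first approximating both clusters by clusters whose interfaces are in ``general position'' with respect to the level sets $\partial\Omega_t$, $t\in(\delta,2\delta)$, then connecting corresponding simplicial pieces radially. A secondary technical point is that the nearest-point projection is only $C^0$-close to an isometry on the collar, so the pulled-back polyhedral structure must be re-triangulated at a scale finer than $\delta$ to control the extra perimeter created by the flattening; this is where the $C^1$ regularity of $\partial\Omega$ is essential.
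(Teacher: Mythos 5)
Your overall architecture (collar near $\partial\Omega$, pull the exterior polyhedral datum inward using the $C^1$ collar structure and transversality to $\partial\Omega^+$, approximate in the interior via the result of \cite{bcg}) is close in spirit to the paper, but there is a genuine gap exactly at the step you yourself flag as ``the main obstacle'': the gluing of the interior polyhedral approximation $\mathcal{K}^{int}$ to the collar construction across $\partial\Omega_{2\delta}$. As proposed it would not go through: the two polyhedral clusters have no common combinatorial structure, and all you know is that each is $L^1$-close to $\mathcal{F}$; their interfaces need not be Hausdorff-close to each other near the transition shell, so ``connecting corresponding simplicial pieces radially'' has no well-defined correspondence to work with, and putting the interfaces in general position does not create one. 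A correct repair would require a Fubini/slicing argument: pick a level surface $\partial\Omega_t$, $t\in(\delta,2\delta)$, on which the two clusters disagree on a set of small $\mathcal{H}^{d-1}$-measure (possible for some $t$ because the $L^1$-distance between them on the shell is small), and insert that disagreement set as an extra ``wall''; even then the wall lives on a $C^1$, not polyhedral, hypersurface and needs a further flattening step with controlled extra area. None of this is carried out in the proposal, and the estimate in (iv) for the shell contribution hinges precisely on it.

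The paper avoids this gluing problem altogether, and this is the essential difference. It first observes that, by inspecting \cite{bcg}, when the cluster is polyhedral outside the approximation domain the approximating polyhedral clusters can be chosen to \emph{coincide} with it outside an $\eps$-neighbourhood of that domain. It then builds $\mathcal{F}_\delta$ by replacing $\mathcal{F}$ in $\Omega\setminus\overline{\Omega_\delta}$ with the reflection of $\mathcal{F}$ from $\Omega^\delta\setminus\overline{\Omega}$ (your ``pull-back along the collar'' is essentially this reflection), which is automatically transversal to $\partial\Omega$ and has small collar perimeter by transversality to $\partial\Omega^+$. Finally it applies the localized \cite{bcg} approximation \emph{in $\Omega_\delta$ with $\mathcal{F}_\delta$ as the datum}, choosing $\eps(\delta)<\delta$ so that $(\Omega_\delta)^{\eps(\delta)}\subset\Omega$: the resulting cluster then agrees with $\mathcal{F}_\delta$, hence with $\mathcal{F}$, outside $\Omega$, inherits transversality from the reflection, and the matching between interior approximation and boundary datum is supplied by the \cite{bcg} construction itself rather than by a hand-made interpolation. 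If you want to keep your two-region scheme, you must either prove the slicing/wall lemma sketched above or, more efficiently, reorganize the argument as the paper does so that the exterior-matching property of the \cite{bcg} approximation does the gluing for you.
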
 

\begin{proof} The first part of the result is proved in \cite[Theorem  2.1 and Corollary 2.4]{bcg}. 
By inspecting the proof in \cite{bcg} one can check that if the initial cluster is polyhedral outside $\Omega$, then the approximating sequence of polyhedral clusters $\mathcal{K}_\eps$ can be chosen in such a way that  $\mathcal{K}_\eps=\mathcal{F}$ in $\R^d\setminus \Omega^\eps$. 

We fix now $\delta>0$  sufficiently small and we substitute $\mathcal{F}$ 
in $\Omega\setminus \overline{\Omega_\delta}$ with the reflection of $\mathcal{F}$ from $\Omega^\delta\setminus\overline{\Omega}$.  The reflection is constructed as follows: We identify points in $\Omega^\delta\setminus\overline \Omega$ and points in $\Omega\setminus \overline{ \Omega_\delta}$  by putting  $x+t\hat \nu(x)=x-t\hat \nu(x)$ for $t\in (0, \delta)$, where 
 $\hat \nu(x)$ is a $C^1$ function which coincides on $\partial \Omega$  with  the outer normal at $x$. 
 In this way we obtain 
a new cluster $\mathcal{F}_\delta$  which coincides with $\mathcal{F}$ in $(\R^d\setminus\Omega)\cup \Omega_\delta$, 
and which is the reflection of $\mathcal{F}$ in $\Omega\setminus\overline{ \Omega_\delta}$. 
Note that, by construction, $\mathcal{F}_\delta$ is transversal to $\partial\Omega$. By using the previous result in the set $\Omega_\delta$, we construct  a family of approximating polyhedral clusters $\mathcal{K}_{\eps, \delta}$ for $\eps\to 0$, which coincide with $\mathcal{F}_\delta$ in $\R^d\setminus (\Omega_\delta)^\eps$. We choose now $\eps=\eps(\delta)<\delta$, 
 so that $(\Omega_\delta)^{\eps(\delta)}\subset\Omega$: Therefore 
$\mathcal{K}_{\eps(\delta), \delta}$ is a polyhedral cluster which coincides with $\mathcal{F}_\delta$ in $\R^d\setminus (\Omega_\delta)^{\eps(\delta)}$ and so in particular coincides with $\mathcal{F}$ in  $\R^d\setminus \Omega$, and   is transversal to $\partial \Omega$. 
Moreover   $\mathcal{K}_{\eps(\delta), \delta}\to \mathcal{F}$ in $L^1(\Omega)$ as $\delta\to 0$, and     for every $\eta>0$ sufficiently small,  there holds $\mathcal{P}_{c}(\mathcal{K}_{\eps(\delta), \delta}; \Omega_\eta)\to \mathcal{P}_{c}(\mathcal{F}; \Omega_\eta)$ as $\delta\to 0$. This implies the conclusion. 
 \end{proof}

 We now provide a $\Gamma$-convergence result, which is based on the analogous result obtained for the single  phase in  \cite{adpm,cv1} and by the density of polyhedral clusters in  Theorem \ref{dens}.    \begin{theorem}\label{teo:gamma} 
 Let $\Omega$  be   a $C^1$  bounded open set and let $\bar{ \mathcal{E}}$ a cluster which is polyhedral in $\R^d\setminus \Omega$ and is transversal to $\partial \Omega^+$.

For every sequence of positive numbers $c=(c_i)_i$,   as $s\to 1$ there holds 
\begin{equation}\label{eqGamma}
(1-s) \mathcal{P}_{s,c}(\mathcal{E}; \Omega)\stackrel{\Gamma}{\longrightarrow}\omega_{d-1}  \mathcal{P}_{c}(\mathcal{E}; \Omega),
\end{equation}
with respect to the $L^1(\Omega)$-convergence, where the functionals  $\mathcal{P}_{s,c}(\mathcal{E}; \Omega)$ and  $\mathcal{P}_{c}(\mathcal{E}; \Omega)$ are defined only on clusters $\mathcal{E}$ such that $\mathcal{E}=\bar{\mathcal{E}}$ in $\R^d\setminus \Omega$, and extended as $+\infty$ elsewhere. 
 
  \end{theorem}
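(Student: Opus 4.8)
The plan is to verify separately the two inequalities defining $\Gamma$-convergence with respect to the $L^1(\Omega)$ topology, reducing both to the single-phase result of \cite{adpm,cv1} applied with the fixed exterior datum, and using Theorem \ref{dens} to pass from polyhedral clusters to arbitrary ones. Throughout, a cluster is understood as extended outside $\Omega$ by $\bar{\mathcal E}$, so that the functionals are finite only on competitors satisfying the Dirichlet condition.

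For the $\liminf$ inequality, I would take any sequence $\mathcal{E}_s\to\mathcal{E}$ in $L^1(\Omega)$; we may assume $\liminf_{s\to1}(1-s)\mathcal{P}_{s,c}(\mathcal{E}_s;\Omega)<+\infty$, hence $\mathcal{E}_s=\bar{\mathcal{E}}$ in $\R^d\setminus\Omega$ for (a subsequence of) $s\to1$. Then each phase $E^s_i$ coincides with $\bar E_i$ outside the bounded set $\Omega$, so $\chi_{E^s_i}\to\chi_{E_i}$ in $L^1(\R^d)$, and the single-phase $\Gamma$-$\liminf$ inequality of \cite{adpm,cv1} yields $\liminf_{s\to1}(1-s)\Per_s(E^s_i;\Omega)\ge\omega_{d-1}\Per(E_i;\Omega)$ for every $i$. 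Since $(1-s)\mathcal{P}_{s,c}(\mathcal{E}_s;\Omega)=\sum_i c_i(1-s)\Per_s(E^s_i;\Omega)$ is a finite sum of nonnegative terms with $c_i>0$, superadditivity of the lower limit gives $\liminf_{s\to1}(1-s)\mathcal{P}_{s,c}(\mathcal{E}_s;\Omega)\ge\sum_i c_i\,\omega_{d-1}\Per(E_i;\Omega)=\omega_{d-1}\mathcal{P}_c(\mathcal{E};\Omega)$.

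For the $\limsup$ inequality (existence of a recovery sequence) I would first treat the case in which $\mathcal{E}$ is polyhedral in $\Omega$, equals $\bar{\mathcal{E}}$ outside $\Omega$, and is transversal to $\partial\Omega$. Then each phase $E_i$ is polyhedral in all of $\R^d$ and no perimeter concentrates on $\partial\Omega$, so the pointwise convergence statement for a single phase in \cite{adpm,cv1} gives $\lim_{s\to1}(1-s)\Per_s(E_i;\Omega)=\omega_{d-1}\Per(E_i;\Omega)$; summing, the constant sequence $\mathcal{E}_s\equiv\mathcal{E}$ is a recovery sequence and $\lim_{s\to1}(1-s)\mathcal{P}_{s,c}(\mathcal{E};\Omega)=\omega_{d-1}\mathcal{P}_c(\mathcal{E};\Omega)$. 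For a general cluster $\mathcal{E}$ with $\mathcal{E}=\bar{\mathcal{E}}$ outside $\Omega$ and $\mathcal{P}_c(\mathcal{E};\Omega)<+\infty$ (otherwise there is nothing to prove), I would apply the second part of Theorem \ref{dens} — legitimate since $\bar{\mathcal{E}}$ is polyhedral in $\R^d\setminus\Omega$ and transversal to $\partial\Omega^+$ — obtaining polyhedral clusters $\mathcal{K}_\eps$ that equal $\bar{\mathcal{E}}$ outside $\Omega$, are transversal to $\partial\Omega$, satisfy $\mathcal{K}_\eps\to\mathcal{E}$ in $L^1(\Omega)$ and $\mathcal{P}_c(\mathcal{K}_\eps;\Omega)\to\mathcal{P}_c(\mathcal{E};\Omega)$. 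Denoting by $F''$ the $\Gamma$-upper limit of $(1-s)\mathcal{P}_{s,c}(\cdot;\Omega)$, which is $L^1(\Omega)$-lower semicontinuous by general theory, the previous step gives $F''(\mathcal{K}_\eps)\le\omega_{d-1}\mathcal{P}_c(\mathcal{K}_\eps;\Omega)$, whence $F''(\mathcal{E})\le\liminf_{\eps\to0}F''(\mathcal{K}_\eps)\le\omega_{d-1}\mathcal{P}_c(\mathcal{E};\Omega)$; equivalently one extracts a diagonal sequence $\mathcal{E}_s:=\mathcal{K}_{\eps(s)}$ with $\eps(s)\to0$ slowly enough.

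The additivity makes the $\liminf$ part essentially immediate once the single-phase statement is granted. The delicate point is the $\limsup$ inequality: the pointwise limit $(1-s)\Per_s(E;\Omega)\to\omega_{d-1}\Per(E;\Omega)$ may fail, picking up an extra contribution supported on $\partial\Omega$, when $E$ is not transversal to $\partial\Omega$, so the constant sequence cannot be used for a general competitor. The main obstacle is therefore to produce polyhedral approximations that simultaneously converge in energy to the given cluster, respect the Dirichlet datum in $\R^d\setminus\Omega$, and are transversal to $\partial\Omega$; this is precisely what the reflection construction in the proof of Theorem \ref{dens} furnishes, and it explains why we require $\bar{\mathcal{E}}$ to be polyhedral outside $\Omega$ and transversal to $\partial\Omega^+$.
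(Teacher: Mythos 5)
Your proposal is correct and follows essentially the same route as the paper: the $\liminf$ inequality by summing the single-phase $\Gamma$-liminf of \cite{adpm,cv1} over the phases, and the $\limsup$ inequality by reducing to polyhedral clusters transversal to $\partial\Omega$ that preserve the exterior datum (the second part of Theorem \ref{dens}), applying the single-phase limsup estimate to each polyhedral phase, and concluding by a diagonal argument (equivalently, lower semicontinuity of the $\Gamma$-upper limit).
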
 
 
 \begin{proof} Let $s\to 1$, $\mathcal{E}^s, \mathcal{E}$ clusters which coincide with $\bar{\mathcal{E}}$ outside $\Omega$ and such that $\mathcal{E}^{s}\to \mathcal{E}$ in $L^1(\Omega)$.  Then using the $\Gamma-$liminf inequality for the single phase proved in \cite{adpm,cv1} we get  \begin{eqnarray}\label{inf} \nonumber
\liminf_{s\to 1}(1-s)\mathcal{P}_{s,c}(\mathcal{E}^{s};\Omega)&\geq& \sum_{i=1}^k c_i \liminf_{s\to 1}(1-s)\Per_{s}(E_i^{s};\Omega)
\\ 
&\geq& \omega_{d-1} \sum_{i=1}^k c_i\Per(E_i; \Omega)=\omega_{n-1}\mathcal{P}_{c}(\mathcal{E};\Omega). 
\end{eqnarray} 
Fix now a cluster $\mathcal{E}$ which coincides with $\bar{\mathcal{E}}$ outside $\Omega$.  By the $\Gamma$-liminf inequality we can restrict to consider clusters whose phases have finite perimeter in $\Omega$.
By Theorem \ref{dens}, for every $\eps$, there exist  polyhedral $\mathcal{K}_\eps$  which are transversal to $\partial\Omega$, coincide with $\bar{\mathcal{E}}$  in $\R^d\setminus\Omega$,  and satisfy $\mathcal{K}_\eps\to \mathcal{E}$ in $L^1(\Omega)$ and  
$\mathcal{P}_{c}(\mathcal{K}_\eps;\Omega)\to \mathcal{P}_{c}(\mathcal{E};\Omega)$ as $\eps\to 0$. 
 By \cite[Lemma 8]{adpm}, there holds  for all $\eps$  
\begin{eqnarray*}\nonumber&& 
\limsup_{s_n\to 1}(1-s_n)\mathcal{P}_{s_n,c}(\mathcal{K}_\eps;\Omega)\leq \sum_{i=1}^k c_i\limsup_{s_n\to 1}(1-s_n)\Per_{s_n}(K_\eps^i;\Omega)\\
& \leq&  \omega_{d-1}\sum_{i=1}^k c_i\Per(K_\eps^i; \Omega)=\omega_{d-1}\mathcal{P}_{c}(\mathcal{K}_\eps;\Omega)\leq \omega_{d-1}\mathcal{P}_{c}(\mathcal{E};\Omega)+o_\eps(1)\end{eqnarray*} where $o_\eps(1)\to 0$ as $\eps\to 0$. 
We conclude recalling that  $\mathcal{K}_\eps\to \mathcal{E}$ in $L^1(\Omega)$ as $\eps\to 0$, and choosing $\eps_n=\eps(s_n)\to 0$ as $s_n\to 1$. 
 \end{proof}

Finally, using the $\Gamma$-convergence result and the density estimates  recalled in Theorem \ref{density}, which are uniform in $s\geq s_0$, we get uniform convergence of minimizers of the Dirichlet problem as $s\to 1$ to the  minimizer of the  Dirichlet problem with local perimeter. 

We recall the definition of Hausdorff convergence.

 \begin{definition} 
 Let $E_n,  E\subset\Omega$, where $\Omega$ is a   open set. We say that $E_n\to E$ locally uniformly in $\Omega$, if for any $\eps>0$ and any $\Omega'\subset \subset \Omega$,  there exists $\bar n$ such that  for all $n\geq \bar n$, we have that 
\[
 \sup_{x\in E_n\cap \Omega'} d(x, E)\leq \eps\quad \text{and} \quad\sup_{x\in (\Omega\setminus E_n)\cap \Omega'} d(x, \Omega\setminus E)\leq \eps.
\]
 \end{definition} 
First of all we state an   equicoercivity property of the functionals $(1-s) \mathcal{P}_{s,c}$, which is obtained by applying to each phase    the equicoercivity result  in \cite[Theorem 1]{adpm}.  
\begin{lemma} \label{lemmacoe} Let $\Omega$  be  a  bounded open set,  $\bar{ \mathcal{E}}$ a $k$-cluster   in $\R^d\setminus \Omega$ and $c=(c_i)$ a sequence of positive numbers. 
Let $s_n\to 1$, and $\mathcal{E}^{s_n}$ a family of $k$-clusters with  $\mathcal{E}^{s_n}=\bar{ \mathcal{E}}$ in $\R^d\setminus \Omega$ and with equibounded energy, that is there exists $C>0$ for which 
\[\sup_{n} (1-s_n) \mathcal{P}_{s_n,c}(\mathcal{E}^{s_n} \Omega)\leq C.\] Then $\mathcal{E}^{s_n}$ is relatively  compact in $L^1(\Omega)$.
\end{lemma}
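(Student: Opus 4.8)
The plan is to exploit the fact that $\mathcal{P}_{s,c}$ is a positive linear combination of the single-phase functionals $\Per_s(\,\cdot\,;\Omega)$, and then to invoke the known single-phase equicoercivity of \cite[Theorem 1]{adpm}. First I would set $\underline c:=\min_i c_i>0$ and observe that, since each term $c_i\Per_{s_n}(E_i^{s_n};\Omega)$ is nonnegative,
\[
(1-s_n)\,\Per_{s_n}(E_i^{s_n};\Omega)\ \le\ \frac{1}{\underline c}\,(1-s_n)\,\mathcal{P}_{s_n,c}(\mathcal{E}^{s_n};\Omega)\ \le\ \frac{C}{\underline c}\qquad\text{for every }i=1,\dots,k .
\]
Thus the energy bound on the cluster yields a uniform bound, in $n$, on the rescaled fractional perimeter of each phase relative to $\Omega$, and each $\chi_{E_i^{s_n}}$ agrees with the fixed function $\chi_{\bar E_i}$ outside the bounded set $\Omega$.

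Next I would apply the single-phase compactness result of \cite[Theorem 1]{adpm} to each index $i$ separately: from $\sup_n(1-s_n)\Per_{s_n}(E_i^{s_n};\Omega)<\infty$ one gets, along a subsequence, $\chi_{E_i^{s_n}}\to\chi_{E_i}$ in $L^1_{loc}(\Omega)$ for some set $E_i$ of finite perimeter in $\Omega$; since the $\chi_{E_i^{s_n}}$ are uniformly bounded and $|\Omega|<\infty$, this upgrades to convergence in $L^1(\Omega)$, and the fixed exterior datum forces $E_i=\bar E_i$ in $\R^d\setminus\Omega$. As there are only finitely many phases, a diagonal extraction produces a single subsequence along which $\chi_{E_i^{s_n}}\to\chi_{E_i}$ in $L^1(\Omega)$ for all $i$ simultaneously.

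Finally I would check that the limiting family $\mathcal{E}=(E_i)_{i=1,\dots,k}$ is again a $k$-cluster: passing to a further subsequence so that the convergence is also pointwise a.e.\ in $\Omega$, the partition constraints $\chi_{E_i^{s_n}}\chi_{E_j^{s_n}}=0$ a.e.\ (for $i\ne j$) and $\sum_i\chi_{E_i^{s_n}}=1$ a.e.\ pass to the limit, giving $|E_i\cap E_j|=0$ and $\sum_i\chi_{E_i}=1$ a.e.\ in $\Omega$; combined with $E_i=\bar E_i$ outside $\Omega$ this says $\mathcal{E}=\bar{\mathcal{E}}$ in $\R^d\setminus\Omega$ and that $\mathcal{E}$ is admissible. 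Hence $\mathcal{E}^{s_n}\to\mathcal{E}$ in $L^1(\Omega)$ along the subsequence, which proves relative compactness. The only point requiring a little care is the first application of \cite[Theorem 1]{adpm}: one must make sure that the compactness statement there holds for the \emph{relative} fractional perimeter $\Per_{s}(\,\cdot\,;\Omega)$ with a prescribed exterior configuration on a merely bounded open set, with a bound uniform as $s\to 1$ — but this is exactly the content of that theorem, so there is no substantial obstacle beyond this bookkeeping and the routine diagonal/pointwise-limit arguments.
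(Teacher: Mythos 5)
Your proof is correct and follows the same route as the paper: the paper's own justification is precisely to bound each phase separately by the weighted total energy and then apply the single-phase equicoercivity result of \cite[Theorem 1]{adpm} phase by phase, with the passage to a common subsequence and to the limit partition being routine. Your additional details (the upgrade from $L^1_{loc}(\Omega)$ to $L^1(\Omega)$ using $|\Omega|<\infty$, and the a.e.\ argument that the limit is again a $k$-cluster) are exactly the bookkeeping the paper leaves implicit.
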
 
 
\begin{theorem} \label{convthm} 
Under the assumptions of Theorem \ref{teo:gamma}, let $s_n\to 1$ and let 
$\mathcal{E}^{s_n}= (E_{1}^{s_n},\dots, E_k^{s_n}) $  be a sequence of minimizers of 
\begin{equation}\label{caio}
\inf_{\{\mathcal{F},\  F_i\setminus \Omega=\bar E_i\}} \mathcal{P}_{s_n,c}(\mathcal{F}; \Omega).\end{equation}
Then, up to a subsequence, $E_i^{s_n}\to E_i$ locally  uniformly in $\Omega$, where  $\mathcal{E}=(E_1, \dots, E_k)$ is a minimizer of \[ \inf_{\{\mathcal{F},\   F_i\setminus \Omega=\bar E_i\}}\mathcal{P}_{c}(\mathcal{F}; \Omega).\]
Moreover, for any  $x\in \mathcal{R}(\mathcal{E})\cap \Omega$  there exists $r_x>0$  such that
$\partial \mathcal{E}^{s_n}\cap B(x, r_x)$  is $C^\infty$-diffeomorphic to $\partial \mathcal{E}\cap B(x, r_x)$ for $n$ large enough.
\end{theorem}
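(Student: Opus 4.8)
The plan is to combine the $\Gamma$-convergence of Theorem \ref{teo:gamma} with the equicoercivity of Lemma \ref{lemmacoe} to pass to the limit on minimizers, and then to upgrade $L^1$-convergence to local uniform (Hausdorff) convergence using the uniform density estimates of Theorem \ref{density}. The final diffeomorphism statement follows from the improved regularity of Theorem \ref{thmsing} together with the fact that, near a regular point, the cluster is an almost-minimizer of the (single-phase) fractional perimeter, and such almost-minimizers converge in $C^{1,\alpha}$ to the limiting minimal surface.

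\emph{Step 1: convergence of minimizers.} Let $\mathcal{E}^{s_n}$ be minimizers of \eqref{caio}. Since the competitor $E_1=\Omega\cup\bar E_1$, $E_j=\bar E_j$ for $j\neq 1$ has $\Per_s(\Omega)$-controlled energy, which multiplied by $(1-s_n)$ stays bounded (by the classical $\Per(\Omega)<\infty$ via \cite{cv1}), we get $\sup_n (1-s_n)\mathcal{P}_{s_n,c}(\mathcal{E}^{s_n};\Omega)\leq C$. By Lemma \ref{lemmacoe}, up to a subsequence $\mathcal{E}^{s_n}\to\mathcal{E}$ in $L^1(\Omega)$ for some cluster $\mathcal{E}$ with $E_i\setminus\Omega=\bar E_i$. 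By the $\Gamma$-convergence of Theorem \ref{teo:gamma}, $\mathcal{E}$ minimizes $\mathcal{P}_c(\cdot;\Omega)$ among clusters with the given exterior datum: the $\liminf$ inequality gives $\omega_{d-1}\mathcal{P}_c(\mathcal{E};\Omega)\leq\liminf_n(1-s_n)\mathcal{P}_{s_n,c}(\mathcal{E}^{s_n};\Omega)$, and the $\limsup$ (recovery sequence) inequality applied to any competitor $\mathcal{F}$ for the local problem, combined with minimality of $\mathcal{E}^{s_n}$, yields $\omega_{d-1}\mathcal{P}_c(\mathcal{E};\Omega)\leq\omega_{d-1}\mathcal{P}_c(\mathcal{F};\Omega)$.

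\emph{Step 2: from $L^1$ to local uniform convergence.} This is the step I expect to require the most care. The idea is standard for density-estimate arguments: $L^1_{loc}$ convergence plus uniform density estimates from both sides force Hausdorff convergence of the (topological) boundaries on compact subsets. Concretely, fix $\Omega'\subset\subset\Omega$ and $\eps>0$. If $x\in\overline{E_i^{s_n}}\cap\Omega'$ but $d(x,E_i)>\eps$, then $|E_i^{s_n}\cap B(x,\eps/2)|\geq\sigma_0\omega_d(\eps/2)^d$ by the lower density estimate (Theorem \ref{density}, uniform in $s_n\geq s_0$ once $n$ is large), while $|E_i\cap B(x,\eps/2)|=0$; this contradicts $\|\chi_{E_i^{s_n}}-\chi_{E_i}\|_{L^1(B(x,\eps/2))}\to 0$ for $n$ large. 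The symmetric statement, with $x\in(\Omega\setminus E_i^{s_n})\cap\Omega'$ and $d(x,\Omega\setminus E_i)>\eps$, uses the upper density estimate (equivalently the lower density estimate for $\R^d\setminus E_i^{s_n}$, which near such a point coincides with a complementary phase or the union of the others). Since there are finitely many phases, one gets the two sup-conditions in the definition of local uniform convergence simultaneously for $n$ large, i.e. $E_i^{s_n}\to E_i$ locally uniformly in $\Omega$.

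\emph{Step 3: the diffeomorphism near regular points.} Let $x\in\mathcal{R}(\mathcal{E})\cap\Omega$. By Theorem \ref{thmsing} applied to the local minimizer $\mathcal{E}$, there are $r>0$ and two indices $i,j$ with $E_h\cap B(x,2r)=\emptyset$ for $h\neq i,j$, $B(x,2r)\subset\Omega$, and $\partial E_i\cap B(x,r)$ a smooth hypersurface. By Step 2, for $n$ large the other phases $E_h^{s_n}$ stay outside $B(x,3r/2)$, so in $B(x,3r/2)$ the cluster $\mathcal{E}^{s_n}$ is effectively a two-phase configuration; as in the proof of Theorem \ref{thmsing} (estimate \eqref{qm}), $E_i^{s_n}$ is a $\Lambda_n$-minimizer of $\Per_{s_n}$ in $B(x,r)$, with $\Lambda_n$ controlled uniformly (the bad term $\frac{d r^{s_n}\omega_d}{s_n}$ is bounded for $s_n\to 1$). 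The uniform density estimates and the known $C^{1,\alpha}$ regularity theory for $\Lambda$-minimizers of the fractional perimeter with $s$ bounded away from $0$, together with the compactness/improvement-of-flatness results (in the spirit of \cite{crs,adpm}), give that $\partial E_i^{s_n}\cap B(x,r_x)$ is, for suitable $r_x\le r$ and $n$ large, a $C^{1,\alpha}$ graph over the tangent hyperplane at $x$ with graphs converging in $C^{1,\alpha}$ to that of $\partial E_i\cap B(x,r_x)$. Two nearby $C^{1,\alpha}$ graphs over the same hyperplane are $C^\infty$-diffeomorphic (indeed, interpolating along the graph direction gives an explicit diffeomorphism of the ambient balls carrying one boundary to the other), which gives the claimed $C^\infty$-diffeomorphism between $\partial\mathcal{E}^{s_n}\cap B(x,r_x)$ and $\partial\mathcal{E}\cap B(x,r_x)$. \qed
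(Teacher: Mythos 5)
Your Steps 1 and 2 are essentially the paper's argument: energy bound from a trivial competitor plus $(1-s_n)\Per_{s_n}(\Omega)\to\Per(\Omega)$, equicoercivity (Lemma \ref{lemmacoe}) and $\Gamma$-convergence (Theorem \ref{teo:gamma}) to identify the limit as a minimizer of the local problem, and then the standard contradiction argument combining the uniform density estimates of Theorem \ref{density} with $L^1$-convergence to upgrade to local uniform convergence. The opening of your Step 3 also matches the paper: near a regular point of $\mathcal{E}$ the other phases are expelled from a ball by Hausdorff convergence, and $E_i^{s_n}$ is a $\Lambda$-minimizer of $\Per_{s_n}$ with $\Lambda$ uniform in $n$, exactly as in the proof of Theorem \ref{thmsing}.

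The gap is in how you finish Step 3, in two respects. First, the decisive technical input is a regularity/improvement-of-flatness theory for $\Lambda$-minimizers of $\Per_s$ whose constants are \emph{uniform as $s\to 1$}, so that the $C^{1,\alpha}$ graph representation of $\partial E_i^{s_n}$ over $\partial E_i$ survives the degeneration $(1-s_n)\Per_{s_n}\to\Per$; you instead appeal to ``regularity theory with $s$ bounded away from $0$'' and to results in the spirit of \cite{crs,adpm}, which are fixed-$s$ (or $\Gamma$-convergence) statements and do not by themselves give estimates stable as $s_n\to 1$. The paper resolves exactly this point by invoking the uniform-in-$s$ improvement of flatness and graph-convergence results of \cite[Theorem 3.4, Corollaries 3.5--3.6]{ffmmm}, which yield $\psi_{s_n}\in C^{1,\alpha}(\partial E_i\cap B(x,r))$ with uniformly bounded norms, $\|\psi_{s_n}\|_{C^1}\to 0$, and $\partial E_i^{s_n}\cap B(x,r)=(Id+\psi_{s_n}\nu_{E_i})(\partial E_i\cap B(x,r))$. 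Without this uniformity the radius $r_x$ and the H\"older exponent could a priori collapse as $s_n\to 1$, and your graph representation is not justified. Second, your conclusion only delivers a $C^{1,\alpha}$-diffeomorphism: interpolating between two $C^{1,\alpha}$ graphs produces a map that is no smoother than the graph functions, so the stated $C^\infty$-diffeomorphism does not follow. The paper closes this by bootstrapping the graph functions $\psi_{s_n}$ to $C^\infty$ (with uniform norms) via the Euler--Lagrange equation and \cite[Theorem 6]{bfv}, using that the contribution of the far-away phases is smooth; alternatively one would at least need to invoke the $C^\infty$ regularity of both boundaries (Theorem \ref{thmsing}) and argue that a $C^1$-small normal graph of one smooth hypersurface over another has a smooth height function, a step you do not take.
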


\begin{proof}First of all, we observe that due to  minimality, reasoning as in the proof of Theorem \ref{exth},  $ (1-s_n) \mathcal{P}_{s_n,c}(\mathcal{E}^{s_n} ; \Omega)\leq  k \max c_i  (1-s_n)\Per_{s_n}(\Omega)\leq C$, since $\lim_n   (1-s_n)\Per_{s_n}(\Omega)= \Per(\Omega)$, see  \cite{cv1}. Now, by Lemma \ref{lemmacoe}, up to passing to a subsequence we have that $\mathcal{E}^{s_n}\to \mathcal{E}$ in $L^1(\Omega)$ and by Theorem \ref{teo:gamma},  $\mathcal{E}=(E_1, \dots, E_k)$ is a minimizer of \[ \inf_{\{\mathcal{F},\   F_i\setminus \Omega=\bar E_i\}}\mathcal{P}_{c}(\mathcal{F}; \Omega).\]

 We show now that, by the density estimates in Theorem \ref{density},  we get that the convergence is locally uniform in $\Omega$. Assume by contradiction that it is not true. 
Then, for some $\Omega'\subset \subset \Omega$ and  for some $\eps>0$, either there exists $x_k\in E_i^{s_k}\cap \Omega'$ such that $d(x_k, E_i)>\eps$ for all $k$  or there exists  $x_k\in (\Omega\setminus E_i^{s_k})\cap \Omega' $ such that $d(x_k, \Omega\setminus E_i)>\eps$. Let us consider the first case (the second is completely analogous). 
By the density estimates in Theorem \ref{density},  letting  $2\delta=\min(d(\partial \Omega', \partial \Omega),\eps)$ we get that $|E_i^{s_k}\cap B(x_k,\delta)|\geq \sigma_0\omega_n \delta^{n}$ for all $k$. Note that $A_k:=E_i^{s_k}\cap B(x_k,\delta)\subset\subset \Omega$, $|A_k|>c>0$ uniformly in $k$ and $A_k\cap E_i=\emptyset$, in contradiction with the $L^1(\Omega)$-convergence of $\chi_{E_k}$ to $\chi_E$.

Finally, let us fix a regular point $x\in \partial \mathcal{E}\cap \Omega$. Then, there exist two indexes $i,j$  and $r>0$ such that $E_h\cap B(x,2r)=\emptyset $  for all $h\neq i,j$.  By Hausdorff convergence, there exists $n_0$ such that for $n>n_0$ there holds that $E_h^{s_n}\cap B(x,r)=\emptyset$ for all $h\neq i,j$ and moreover, reasoning as in the proof of Theorem \ref{thmsing} $E_i^{s_n}$ is a $\Lambda$-minimizer for $\Per_{s_n}$ in $B(x,r)$, where $\Lambda$ can be chosen uniform in $n>n_0$.  By the uniform in $s$ improvement of flatness of $\Lambda$-minimizers of $\Per_s$  proved in \cite[Theorem 3.4, Corollary 3.5]{ffmmm}, we get that, eventually reducing $r$, all the points in  $\partial\mathcal{E}^{s_n}\cap B(x,r)$ are regular for $n>n_0$. Finally, by \cite[Corollary 3.6]{ffmmm} we conclude that  there exist $\alpha\in (0,1)$ and a sequence $\psi_{s_n}\in C^{1,\alpha}(\partial E_i\cap B(x,r))$  such that $\|\psi_{s_n}\|_{C^{1,\alpha}}\leq C$ for $n>n_0$, $\lim_{s_n\to 1}\|\psi_{s_n}\|_{C^{1}}=0$ and $ \partial E_i^{s_n}\cap B(x,r)=(Id+\psi_{s_n}\nu_{E_i})(\partial E_i\cap B(x,r))$, for all $n>n_0$. Actually, by the bootstrap argument in \cite[Theorem 6]{bfv} actually $\psi_{s_n}\in C^\infty(\partial E_i\cap B(x,r))$, with uniform norm.    This gives the conclusion. \end{proof} 

\begin{remark}\upshape \label{remsin}
Note that Theorem \ref{convthm} does not imply that 
$\partial \mathcal{E}^{s_n}\cap\Omega$ is diffeomorphic to $\partial \mathcal{E}\cap\Omega$ for $n$ large enough. 
The main obstruction to obtain such a result (which is expected) is the lack of a regularity theory up to the singular set of the cluster.
We point out that, for cluster minimizing the classical perimeter, the regularity theory around singular points is well-developed only in dimension $d=2,3$ 
(see \cite{maggibook,cm}).    
\end{remark} 

\begin{remark}\upshape We observe that all the results in this section can be easily extended to the isoperimetric clusters considered in \cite{cm}.
\end{remark}

\section{Minimal cones} 
In this section we restrict to the 2-dimensional case, $d=2$, and to consider the functional \eqref{fun2}, that is we assume that all the weights $c_i$ are equal.

We recall the definition of local minimizer (or minimizer up to compact perturbations).
\begin{definition} We say that the $k$-cluster $\mathcal{E}$ is a local minimizer for \eqref{fun2} if for every $R>0$ and every ball $B_R$ of radius $R$, there holds
\[\mathcal{P}_s(\mathcal{E}; B_R)\leq \mathcal{P}_s(\mathcal{F}; B_R)\]
for all  $k$-clusters $\mathcal{F} $, such that $F_i\setminus B_R= E_i\setminus B_R$ for all $i$. 
\end{definition}

We now observe that there exists a unique  $3$-cone which is a stationary point for \eqref{fun2}. 

\begin{lemma}  \label{lemma3cones}
Among all $3$-cones in $\R^2$, 
 there exists a unique cone which is stationary for the functional in \eqref{fun2}, and the opening angles are equals, and coincide with $2/3\pi$.
\end{lemma}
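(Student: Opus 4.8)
The plan is to parametrize a generic $3$-cone in $\R^2$ by its three opening angles $\alpha_1,\alpha_2,\alpha_3$ with $\alpha_1+\alpha_2+\alpha_3=2\pi$, and to write down the stationarity condition coming from rotating each of the three half-lines emanating from the origin. Since $d=2$ and all weights are equal, the relevant first variation is obtained by perturbing one of the three dividing rays while keeping the vertex fixed; the competitor is again (close to) a cone, so the fractional perimeter of a cone is scale-invariant and one reduces to studying a finite-dimensional function $g(\alpha_1,\alpha_2,\alpha_3)$ that records the ``fractional tension'' density along each ray. Concretely, I would fix the cone $\mathcal{C}$, take a localized perturbation supported in $B_R$ that rotates the $i$-th ray by an infinitesimal angle, compute $\frac{d}{dt}\mathcal{P}_s(\mathcal{C}_t;B_R)$ at $t=0$, and let $R\to\infty$; by scaling the leading term will be finite and will be, up to a positive constant, the difference of two one-sided contributions of the form
\[
\Psi(\beta)-\Psi(\gamma),
\]
where $\beta,\gamma$ are the two angles adjacent to the rotated ray and $\Psi$ is an explicit, strictly monotone function of the opening angle (a fractional analogue of the fact that in the classical case the ``force'' a sector exerts on its bounding ray is a constant independent of the angle, so balance forces $120^\circ$; in the fractional case the per-ray contribution genuinely depends on the adjacent angles, which is what gives uniqueness). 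Stationarity then means $\Psi(\alpha_i)=\Psi(\alpha_j)$ for all adjacent pairs, hence all three angles are equal, hence each equals $2\pi/3$.

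The key steps, in order, are: (1) reduce the first variation of $\mathcal{P}_s$ on cones to a one-parameter family obtained by rotating a single ray, and justify that the perturbed objects can be taken to be cones so that the scaling argument applies and the limit $R\to\infty$ produces a finite, well-defined derivative; (2) carry out the explicit differentiation of $J_s$ under the rotation, isolating the boundary term and identifying the function $\Psi$ governing the contribution of an angular sector; (3) prove that $\Psi$ is strictly monotone on $(0,2\pi)$, so that $\Psi(\alpha_i)=\Psi(\alpha_j)$ forces $\alpha_i=\alpha_j$; (4) combine the three balance relations with the constraint $\sum_i\alpha_i=2\pi$ to conclude $\alpha_i=2\pi/3$, and conversely check that the symmetric cone is indeed stationary (which is immediate by symmetry, since rotating any ray by $t$ and by $-t$ give the same energy, so the derivative vanishes).

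I expect step (2)--(3) to be the main obstacle: writing the first variation of the nonlocal perimeter of a cone under rotation of one ray in a clean enough closed form to read off $\Psi$, and then establishing strict monotonicity of $\Psi$. Unlike the local case, the nonlocal interaction couples far-apart parts of the boundary, so the computation does not localize trivially near the vertex; one must exploit the homogeneity of the kernel $|x-y|^{-(2+s)}$ and pass to polar coordinates to extract a purely angular integral, and then the monotonicity of $\Psi$ will reduce to a sign computation for an integral with the singular kernel, plausibly handled by a symmetrization or a convexity argument in the angular variable. The uniqueness half of the statement is entirely contained in this monotonicity; once $\Psi$ is shown strictly monotone everything else is bookkeeping.
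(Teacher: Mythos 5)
Your overall strategy is the right one --- at each of the three rays the stationarity condition should reduce to the equality of a strictly monotone function of the two adjacent opening angles, and with $\alpha_1+\alpha_2+\alpha_3=2\pi$ this forces all angles to equal $2\pi/3$ --- but the proposal leaves unproven exactly the step that constitutes the paper's proof. You defer the identification of $\Psi$ and its strict monotonicity to a ``symmetrization or convexity argument'' to be found later; in the paper this is the whole content of the lemma. There the stationarity condition is taken in its pointwise Euler--Lagrange form $H_s(x,C_i)=H_s(x,C_j)$ on each interface (with $H_s$ the fractional curvature \eqref{fcurv}), one first shows $\alpha_i<\pi$ for all $i$ from the sign of the curvature of a sector, and then a reflection across the common ray rewrites $H_s(x,C_1)$ and $H_s(x,C_2)$ as integrals of the positive kernel over sectors of apertures $2\pi-2\alpha_1$ and $2\pi-2\alpha_2$, both symmetric about that ray; monotonicity in the aperture is then immediate, giving $\alpha_1=\alpha_2$ and, by repetition, equality of all three angles. (Alternatively one can observe that for $x$ on the common ray at unit distance from the vertex, $H_s(x,C_i)$ depends on $\alpha_i$ alone and is strictly decreasing in $\alpha_i$, since enlarging the sector decreases $\chi_{\R^2\setminus C}-\chi_C$ pointwise and the difference of the two principal values is an absolutely convergent integral over a wedge at positive distance from $x$.) Until you carry out some such computation, the uniqueness half of the statement is not proved.

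Moreover, the specific variational setup you propose to derive the balance condition is flawed. Since each perturbed configuration is again a cone, exact scale invariance gives $\frac{d}{dt}\mathcal{P}_s(\mathcal{C}_t;B_R)\big|_{t=0}=R^{2-s}\,\frac{d}{dt}\mathcal{P}_s(\mathcal{C}_t;B_1)\big|_{t=0}$, so the limit as $R\to\infty$ is either $0$ or $\pm\infty$: the claim that ``by scaling the leading term will be finite'' is incorrect, and you would have to work at a fixed scale (or normalize by $R^{2-s}$) instead. In addition, a global rotation of a ray displaces the interface by an amount growing linearly in $|x|$ and is not a compactly supported perturbation, so the vanishing of this derivative at a stationary cone does not follow directly from stationarity (defined via the first variation at boundary points, i.e.\ under localized perturbations) without an extra cutoff/approximation argument, which is delicate precisely because the localized energies diverge like $R^{2-s}$. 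The paper avoids all of this by starting directly from the pointwise condition \eqref{cond}; if you adopt that starting point, your scheme reduces to the paper's, and the only missing ingredient is the explicit monotonicity argument described above.
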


\begin{proof} We consider a cone $\mathcal{C}=(C_1, C_2, C_3)$ with 3 half-lines and vertex $x_0$ which is stationary for the functional \eqref{fun2} (so, the first variation of \eqref{fun2} at every boundary point is $0$). We denote with $\alpha_i$ the angle associated to the sector $E_i$, so $\alpha_1+\alpha_2+\alpha_3=2\pi$.  
Up to a translation we assume that the vertex of the cone is $0$. 

The stationarity condition reads 
\begin{equation}\label{cond}H_s(x, C_i) =H_s(x, C_j) \qquad \forall x\in  \partial C_i\cap \partial C_j,\  x\neq 0\end{equation}
where $H_s(x, C_i)$ is the fractional curvature at $x\in \partial C_i$, defined in \eqref{fcurv}. 

It is easy to check that of $x\in \partial C_i\cap \partial C_j$, we have that \begin{equation}\label{condp}H_s(x, C_i)\leq 0\qquad \text{ if and only if }\quad \alpha_i\geq \pi.\end{equation}   Using this observation, \eqref{cond},  and the fact that $\alpha_1+\alpha_2+\alpha_3=2\pi$, we  have that  $\alpha_i<\pi$.   

We exploit now condition \eqref{cond} for $i=1, j=2$ (all the other cases will be analogous). 
We assume without loss of generality that $\alpha_1\geq \alpha_2$ and we write $C_1= \tilde C_2 \cup B$, where $\tilde C_2$ is the symmetric of $C_2$ with respect to the half-line separating $C_1, C_2$ and $B$ is a sector of the cone with opening angle $\alpha_1-\alpha_2$.  Let $\tilde B\subseteq C_3$ be the symmetric of $B$ with respect to the half-line separating $C_1, C_2$. 
By symmetry properties of the kernel  it is easy to check that
\begin{eqnarray}\label{c} 
H_s(x, C_1)&=& \int_{C_3}\frac{1}{|x-y|^{2+s}}dy-\int_B \frac{1}{|x-y|^{2+s}}dy=\int_{(C_3\setminus \tilde B)-x}\frac{1}{|y|^{2+s}}dy,\\H_s(x, C_2) &=&\int_{C_3}\frac{1}{|x-y|^{2+s}}dy+\int_{B} \frac{1}{|x-y|^{2+s}}dy=\int_{(C_3\cup B)-x}\frac{1}{|y|^{2+s}}dy.\nonumber 
\end{eqnarray} 
Note that $C_3\setminus \tilde B$  is a sector of the cone with opening angle $\alpha_3-\alpha_1+\alpha_2= 2\pi-2\alpha_1>0$, whereas  $C_3\cup  B$  is a sector of the cone with opening angle 
$\alpha_3+\alpha_1-\alpha_2=2\pi-2\alpha_2>0$, and both are symmetric with respect to the half-line separating $C_1, C_2$. 
Therefore condition \eqref{cond} implies that $2\pi-2\alpha_1=2\pi-2\alpha_2$. 
Repeating the argument we get that $\alpha_1=\alpha_2=\alpha_3$. 
\end{proof} 

 \begin{proposition}\label{dirdue} 
 Let $\Omega\subset\R^2$ be a bounded  $C^1$ open set containing the origin, let $k=3$  and let $\bar E_i$ be the exterior datum defined as 
 \[
 \bar E_i:= \left\{x\in\R^2:\,x\cdot n_i>\frac 12\right\},
 \qquad \quad n_i := \left( \cos\left(\frac 23 \pi i\right),\sin\left(\frac 23 \pi i\right)\right).
 \]
Then there exists $s_0\in (0,1)$  such that for $s>s_0$ every minimizer of the Dirichlet problem \eqref{dir2} with $c_i=1$ for all $i$   
has a nonempty singular set in $\Omega$.
 \end{proposition}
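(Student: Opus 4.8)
The plan is to argue by contradiction and reduce to the classical ($s=1$) Dirichlet problem, exploiting the convergence of minimizers proved above. Suppose the conclusion fails: then there are $s_n\to 1$ and minimizers $\mathcal{E}^{s_n}=(E^{s_n}_1,E^{s_n}_2,E^{s_n}_3)$ of \eqref{dir2} with $c_i\equiv 1$ whose singular set in $\Omega$ is empty, so that by Theorem~\ref{thmsing} the set $\partial\mathcal{E}^{s_n}\cap\Omega$ is a $C^\infty$ curve, properly embedded in $\Omega$. By Theorem~\ref{convthm}, up to a subsequence $E^{s_n}_i\to E_i$ locally uniformly in $\Omega$, and $\mathcal{E}=(E_1,E_2,E_3)$ minimizes the classical Dirichlet problem $\inf\{\mathcal{P}(\mathcal{F};\Omega):\ F_i\setminus\Omega=\bar E_i\}$ with the same exterior datum.

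The first step is to show that every such classical minimizer $\mathcal{E}$ has a triple point in $\Omega$. Here the special geometry of the datum enters: the three phases are prescribed outside $\Omega$ in a configuration invariant under the rotation $R$ by $2\pi/3$ about the origin (which permutes the $\bar E_i$ cyclically), in which all three pairs of phases share an interface reaching $\partial\Omega$, and — since each such interface, followed outside $\Omega$, runs off to infinity along a single half-line through (a point near) the origin — the number of $\{i,j\}$-transition points on $\partial\Omega$ is \emph{odd} for each pair (by the $R$-symmetry it is the same for the three pairs, and it is nonzero). If $\mathcal{E}$ had no triple point in $\Omega$, then, using the classical boundary regularity of planar perimeter-minimizing clusters, for each pair $\{i,j\}$ the interface $\partial E_i\cap\partial E_j\cap\Omega$ would be a $C^\infty$ curve whose closure in $\overline\Omega$ is a compact $1$-manifold with boundary lying on $\partial\Omega$ and consisting exactly of those transition points; a compact $1$-manifold with boundary has an even number of boundary points, a contradiction. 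Hence $\mathcal{E}$ has a triple point; combining the symmetry with the structure theorem for planar minimal clusters (a finite union of segments meeting three at a time at $120^\circ$, cf.\ Lemma~\ref{lemma3cones}) one can in fact identify $\mathcal{E}$ and locate its unique triple point at the origin, but below we only use that there is a triple point $p\in\Omega$ of $\mathcal{E}$.

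The second step transfers this to the $\mathcal{E}^{s_n}$. Fix $r>0$ with $\overline{B_{2r}(p)}\subset\Omega$ so small that inside $B_{2r}(p)$ the set $\partial\mathcal{E}$ consists exactly of the three segments issuing from $p$, and such that $\partial\mathcal{E}$ meets $\partial B_\rho(p)$ transversally, in exactly three points $q^\rho_{12},q^\rho_{23},q^\rho_{13}$ with $q^\rho_{ij}\in\partial E_i\cap\partial E_j$, for every $\rho\in(r,2r)$. Using the local uniform convergence $E^{s_n}_i\to E_i$ together with the density estimates of Theorem~\ref{density} (which are uniform as $s\to1$), one gets, for $n$ large: all three phases of $\mathcal{E}^{s_n}$ are present in $B_r(p)$; on $\partial B_\rho(p)$, outside fixed small neighborhoods of the $q^\rho_{ij}$, the trace of $\mathcal{E}^{s_n}$ is single-phased and equal to that of $\mathcal{E}$; and near $q^\rho_{ij}$ only the phases $E^{s_n}_i,E^{s_n}_j$ occur, since the third phase has zero density there for $\mathcal{E}$ and so, by the lower density estimate, cannot appear for $\mathcal{E}^{s_n}$. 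Therefore the only transition points of $\mathcal{E}^{s_n}$ on $\partial B_\rho(p)$ are $\{i,j\}$-transitions clustered near $q^\rho_{ij}$, and since the two arcs adjacent to $q^\rho_{ij}$ carry the distinct phases $E^{s_n}_i,E^{s_n}_j$, there is an odd number of them. On the other hand, choosing (by Sard's theorem) $\rho\in(r,2r)$ transversal to the $C^\infty$ manifold $\partial\mathcal{E}^{s_n}\cap\Omega$, and using that $\mathcal{E}^{s_n}$ has empty singular set, the interface $\partial E^{s_n}_i\cap\partial E^{s_n}_j\cap\overline{B_\rho(p)}$ is a compact $1$-manifold with boundary equal to exactly those transition points, so their number is even. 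This contradiction finishes the proof.

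I expect the first step — establishing a triple point in the classical minimizer — to be the main obstacle, since that is where one must understand the prescribed exterior datum and rule out triple-point-free competitors; the ``parity'' mechanism used there is in fact robust enough to be applied directly to $\mathcal{E}^{s_n}$ (whose singular set is discrete by Theorem~\ref{thmsing}), which would yield the statement for \emph{every} $s\in(0,1)$, provided one is willing to invoke regularity of the fractional minimizers up to $\partial\Omega$; the reduction to $s=1$ above is the cautious route that avoids this and matches the hypotheses as stated.
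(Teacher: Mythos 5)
Your overall strategy --- argue by contradiction, pass to the limit $s_n\to 1$ via Theorem~\ref{convthm}, and then derive a topological contradiction near a triple point of the limiting classical minimizer --- is the same as the paper's, and your Step 2 is a sound variant of the paper's concluding argument: where the paper shows that regular interfaces, Hausdorff-close to a radius of $B(0,r)$, would confine one of the two adjacent phases in small connected components (contradicting the locally uniform convergence to the three sectors), you run an odd/even count of phase transitions on a circle $\partial B_\rho(p)$ against the parity of the boundary points of the compact $1$-manifold $\partial E_i^{s_n}\cap\partial E_j^{s_n}\cap\overline{B_\rho(p)}$. Modulo the transversality bookkeeping, that part works.

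The gap is in Step 1, i.e.\ in producing the triple point of the classical minimizer. First, the symmetry claim is unfounded: only the exterior datum is covariant under the rotation by $2\pi/3$, while $\Omega$ is an arbitrary bounded $C^1$ set containing the origin, so you cannot conclude that the three pairs have the same number of transition points, nor locate a triple point at the origin ``by symmetry''. Second, and more seriously, the parity argument on $\partial\Omega$ is not justified: for a general $C^1$ domain the exterior interfaces may meet $\partial\Omega$ tangentially or along whole arcs, so ``the number of $\{i,j\}$-transition points on $\partial\Omega$ is odd'' is not even well defined without further hypotheses; moreover, the assertion that, in the absence of interior triple points, the closure of $\partial E_i\cap\partial E_j\cap\Omega$ in $\overline\Omega$ is a compact $1$-manifold with boundary consisting exactly of those transition points requires a boundary regularity theory for the Dirichlet cluster problem up to $\partial\Omega$ (excluding interfaces accumulating on $\partial\Omega$ or reaching it at non-transition points), which you neither prove nor can quote off the shelf. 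The paper sidesteps all of this by identifying the limit outright: with this exterior datum the classical Dirichlet problem is the geometric Steiner problem, whose minimizer is the sector configuration $E_i=\bar E_i$ (unique, by paired calibrations, cf.\ \cite{lf}), hence with a triple point at the origin; if you replace your Step 1 by this identification, your Step 2 completes the proof.
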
 
 
 \begin{proof}  
 Let $\mathcal{E}_s=(E_1^s, E_2^s, E_3^s)$ be a solution to the Dirichlet problem \eqref{dir2} with $c_i=1$. Let $\mathcal{E}$ the solution to the  Dirichlet problem  with  the same boundary data and functional given by the local perimeter \eqref{wpc}, with all $c_i=1$. Then $\mathcal{E}$ is the solution of the classical geometric Steiner problem and $E_i=\bar E_i$ for every $i$.  By Theorem \ref{teo:gamma}, up to a subsequence we get that $E_i^s\to \bar E_i$ locally uniformly in $\Omega$ as $s\to 1$, for $i\in \{1,2,3\}$.
 Let $R>0$ be such that $B(0,R)\subset\Omega$.
 
 Assume by contradiction that there is a sequence $s_n\to 1$ such that $\partial E_i^{s_n}\cap\Omega$ is of class $C^1$ for all $n$'s. 
 There exists $r\in (0,R)$ such that, 
 for $i\ne j$, the set $\gamma^n_{ij}:=\partial E_i^{s_n}\cap \partial E_j^{s_n}\cap B(0,r)$ is a finite number of $C^1$ curves with
 endpoints on $\partial B(0,r)$, converging to the segment 
 $\partial \bar E_i\cap \partial\bar E_j\cap B(0,r)$ as $n\to +\infty$ in the Hausdorff distance. In particular, given $\eps>0$, 
 for $n$ large enough the set $\gamma^n_{ij}$ 
 divides the circle $B(0,r)$ into a finite number of small connected components and one large connected component of area greater than
 $|B(0,r)|-\eps$. As a consequence either the set $E_i^{s_n}\cap B(0,r)$ or  $E_j^{s_n}\cap B(0,r)$
 is contained in the union of such small connected components,
 so that either $|E_i^{s_n}\cap B(0,r)|\le \eps$ or  $|E_j^{s_n}\cap B(0,r)|\le \eps$ for $n$ large enough,
 contradicting the convergence of $E_k^{s_n}\cap B(0,r)$ to $\bar E_k\cap B(0,r)$, for all $k\in\{1,2,3\}$.
 \end{proof}

\begin{theorem} \label{cone} 
There exists $s_0\in (0,1)$ such that the following holds: Among all cones, the unique local minimizers for $\mathcal{P}_s$, for $s>s_0$, are  half-planes and $3$-cones with equal  opening angles given by  $2/3 \pi$. 
\end{theorem}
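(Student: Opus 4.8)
The plan is to establish the two inclusions separately: that every local minimizing cone is a half-plane or a $120^\circ$ triple junction, and conversely that all of these are local minimizers. For the first and harder inclusion I would begin with the regularity theory: by Theorem \ref{thmsing} the only possible singular point of a local minimizing cone $\mathcal{C}$ is its vertex (the singular set is discrete in $d=2$ and, being a cone, dilation invariant), and away from the vertex $\partial\mathcal{C}$ is a $C^\infty$ hypersurface; combined with scale invariance this forces $\mathcal{C}$ to be a union of finitely many circular sectors, i.e.\ finitely many rays emanating from the vertex. Let $m\ge 1$ be the number of sectors, the case $m=1$ being the trivial cone with empty boundary.

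Next I would dispose of the cases $m\le 3$ using stationarity, which every local minimizer satisfies in the form \eqref{cond}. Recall that the fractional curvature of a sector of opening $\alpha$ vanishes along its bounding rays precisely when $\alpha=\pi$ and is negative iff $\alpha>\pi$, as in \eqref{condp}. If $m=2$ there are two phases with $H_s(x,C_1)=H_s(x,C_2)=-H_s(x,C_1)$, forcing $\alpha_1=\alpha_2=\pi$: a half-plane. If $m=3$ with three distinct phases, $\mathcal{C}$ is a stationary $3$-cone and Lemma \ref{lemma3cones} gives the $120^\circ$ configuration. If $m=3$ with only two phases (pattern $A,B,A$), applying the same curvature identity to the single $B$-sector forces its opening to be $\pi$, which degenerates to a half-plane. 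Hence the whole content of the theorem is the exclusion, for $s$ close to $1$, of local minimizing cones with $m\ge 4$ sectors.

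This exclusion I would prove by contradiction and a compactness argument towards the classical limit. Suppose there were $s_n\to 1$ and local minimizing cones $\mathcal{C}^n$ for $\mathcal{P}_{s_n}$ with at least four sectors. The uniform density estimates of Theorem \ref{density} (together with the uniform upper perimeter bounds they entail for $\Lambda$-minimizers) yield, up to a subsequence, a limit $\mathcal{C}^n\to\mathcal{C}$ locally uniformly with $\mathcal{C}$ again a cone; moreover, running the $\Gamma$-convergence of Theorem \ref{teo:gamma} and the convergence-of-minimizers argument of Theorem \ref{convthm} in each ball $B_R$, with the convergent traces of $\mathcal{C}^n$ on $\partial B_R$ in the role of exterior data (and a diagonal argument in $n$ and $R$), shows that $\mathcal{C}$ is a local minimizer for the classical perimeter $\mathcal{P}$. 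By the classification of planar perimeter-minimizing cones (see \cite{maggibook}), $\mathcal{C}$ is then a half-plane or a $120^\circ$ triple junction, hence has at most three sectors. To reach a contradiction one must ensure that the four (or more) sectors of $\mathcal{C}^n$ do not degenerate in the limit, i.e.\ that their openings are bounded below uniformly in $n$; this is the main obstacle, and I would handle it by showing directly that a local minimizing cone for $\mathcal{P}_s$, $s\ge s_0$, admits no arbitrarily thin sector — pinching off a thin sector of phase $C_i$ near a sphere $\partial B_\rho$ and letting the neighbouring phases take over produces a strictly better competitor, the energy comparison being made quantitative and uniform in $s$ close to $1$ through the same density and $\Gamma$-convergence estimates (the symmetric four-sector cross, which is stationary, being excluded by the analogous Steiner-type competitor). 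With no sector lost, $\mathcal{C}$ would have at least four sectors, contradicting the classical classification.

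Finally I would prove the converse inclusion. Half-planes are local minimizers for every $s\in(0,1)$: given a competitor $\mathcal{F}$ coinciding with a half-plane configuration outside $B_R$, merging all phases other than the two active ones into one of them does not increase $\mathcal{P}_s$, since $\Per_s(A\cup B)\le\Per_s(A)+\Per_s(B)$ for disjoint $A,B$, so the problem reduces to the two-phase case settled in \cite{adpm,crs}. For the $120^\circ$ triple cone, Proposition \ref{dirdue} provides, for $s>s_0$, a minimizer of the Dirichlet problem with the $120^\circ$-compatible exterior datum having a singular point in $\Omega$; blowing up at that point (Theorem \ref{thmsing}, and the fact that blow-ups of minimizers are local minimizers) yields a singular local minimizing cone for $\mathcal{P}_s$, which by the first part of the proof must be a $120^\circ$ triple junction, and by the rotational invariance of $\mathcal{P}_s$ every such cone is then a local minimizer. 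Taking $s_0$ to be the largest of the thresholds appearing above completes the argument.
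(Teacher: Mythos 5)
Your overall architecture is essentially the paper's: compactness along a sequence $s_n\to 1$ using the uniform density estimates of Theorem \ref{density} and the $\Gamma$-convergence of Theorem \ref{teo:gamma}, identification of the limit as a classical minimal cone and appeal to the planar classification (half-plane or $120$-degree triple junction), Lemma \ref{lemma3cones} to pin down the angles of a stationary $3$-cone, and Proposition \ref{dirdue} together with a blow-up at a singular point to show that singular minimal cones actually exist. Your additional observations --- that a minimizing cone can only be singular at its vertex and hence consists of finitely many sectors, the stationarity analysis for $m\le 3$ rays, and the subadditivity of $\Per_s$ reducing the half-plane case to the two-phase classification of \cite{adpm,crs} --- are all consistent with the paper and in places more explicit than it.

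The genuine gap is the step you yourself identify as the crux: the exclusion of minimizing cones with four or more rays hinges on an unproven ``no arbitrarily thin sector'' lemma, for which you offer only a one-line pinching heuristic. As stated this is a nontrivial quantitative claim: the competitor obtained by capping a thin sector near $\partial B_\rho$ has to beat the cone uniformly as $s\to 1$ while the opening angle is allowed to degenerate simultaneously with $s_n\to 1$, and neither the $\Gamma$-convergence (which carries no rate) nor the density estimates (which control the volume of a whole phase, not of an individual sector of a possibly multi-sector phase) supply the uniform energy comparison you invoke; the parenthetical dismissal of the stationary cross is likewise only asserted. The paper closes this step without any pinching argument: since each $\mathcal{C}_n$ is a cone, any phase that vanishes in the $L^1_{loc}$ limit must be empty for large $n$ (a nonempty conical phase has boundary points at unit distance from the vertex, and the uniform lower density bound of Theorem \ref{density} would contradict the convergence), so the locally uniform convergence established as in Theorem \ref{convthm} forces $\mathcal{C}_n$ to be, for large $n$, either a two-phase cone --- hence a half-plane by the $k=2$ classification --- or a $3$-cone, to which Lemma \ref{lemma3cones} applies. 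If you wish to keep your route you must actually prove the uniform thin-sector exclusion; the more economical repair is to replace it by this density-estimate/Hausdorff-convergence argument, which is exactly how the paper concludes.
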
 

\begin{proof}
Let $s_n\to 1$ and let   $\mathcal C_n$ be a sequence of minimal cones for $\mathcal P_s$.
By Theorem \ref{teo:gamma} there exists a minimal cone $\mathcal C$ for the classical perimeter such that 
$\mathcal C_n\to \mathcal C$ locally uniformly as $n\to \infty$. Since the only minimal cones in $\R^2$ are half-planes
or $3$-cones with angles of $2/3\pi$ \cite{alm}, it follows by the uniform convergence
 that also the $\mathcal C_n$'s are a half-spaces or $3$-cones for $n$ large enough.
 By Lemma \ref{lemma3cones}, if $\mathcal C_n$ is a minimal $3$-cone then necessarily 
 it has equal angles of $2/3\pi$. 
 
 By Proposition \ref{dirdue}  we know that there exist minimal cones which are not half-planes,
 and this concludes the proof.
 \end{proof}
 
 \begin{remark}\upshape An interesting issue which is left open is whether Theorem \ref{cone} is true for all $s\in (0,1)$. We conjecture this is the case, 
 but in order to prove this result it would be necessary to develop some new technical argument. A related problem is about the possibility  of extending the nonlocal calibrations recently introduced in \cite{c,p} to clusters, 
 in the same spirit of the paired calibrations used in \cite{lf}. 
 \end{remark}

\begin{remark}\upshape  
By Theorem \ref{convthm},  for every $r>0$ there exists $s_r\in (0,1)$ such that the  solution to the Dirichlet problem given in Proposition \ref{dirdue},
with $s\in [s_r,1)$, is diffeomorphic in $\Omega\setminus B(0,r)$ to the solution of the classical Steiner problem, which is given by $(\bar E_1, \bar E_2, \bar E_3)$.   We point out, recalling Remark \ref{remsin}, that even if the limit cluster has only one singular point in $0$, 
our results do not exclude that the approximating clusters have more singular points, all converging to $0$ as $s\to 1$.  
 \end{remark}

\section{Weighted fractional perimeters}
Let us fix a sequence $c_i$ with $i\in\N$, such that $c_i>0$ for all $i$ and consider the energy associated to a $k$-cluster $\mathcal{E}$ and to the sequence $c_i$ as
\begin{equation}\label{fun3} \mathcal{P}_{s,c}(\mathcal{E}; \Omega)=\sum_{1\leq i\leq k}c_i \Per_s (E_i; \Omega).
\end{equation}

First of all we consider the generalization of Lemma \ref{lemma3cones}.
\begin{lemma}   \label{lemma3cones2} 
Among all $3$-cones in $\R^2$ 
 there exists a unique cone which is stationary for the functional in \eqref{fun3}, and the opening angles are uniquely determined as functions of $c_i$. 
  \end{lemma}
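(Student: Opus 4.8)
The plan is to follow the scheme of Lemma~\ref{lemma3cones}, replacing the reflection trick used there by a monotonicity argument that tolerates unequal weights. As in that proof I would first consider a $3$-cone consisting of three half-lines issuing from a common vertex, which up to a translation is the origin, with opening angles $\alpha_1,\alpha_2,\alpha_3>0$, $\alpha_1+\alpha_2+\alpha_3=2\pi$, the angle $\alpha_i$ being associated with the phase $C_i$. The Euler--Lagrange condition for the (unconstrained) functional \eqref{fun3} at interface points, which by the scale invariance of the cone carries no additive constant, reads
\[
c_i\,H_s(x,C_i)=c_j\,H_s(x,C_j)\qquad \text{for all }x\in\partial C_i\cap\partial C_j,\ x\neq 0 .
\]
Since $C_i$ is itself a cone, $H_s(x,C_i)=|x|^{-s}\,H_s(x/|x|,C_i)$, so the condition is a relation between the angles only. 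I would introduce $F(\alpha):=H_s(x_0,W_\alpha)$, where $W_\alpha\subset\R^2$ is a planar wedge of opening $\alpha\in(0,2\pi)$ with vertex at the origin and $x_0$ a unit vector on one of its two boundary rays; $F$ is well defined by rotation invariance, and independent of the chosen ray by the reflection across the bisector of $W_\alpha$. Then $H_s(x,C_i)=|x|^{-s}F(\alpha_i)$ on $\partial C_i\setminus\{0\}$, and the stationarity conditions on the three interfaces are equivalent to
\[
c_1F(\alpha_1)=c_2F(\alpha_2)=c_3F(\alpha_3)=:\lambda .
\]

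Next I would establish the analytic properties of $F$. For $0<\alpha<\alpha'<2\pi$ the identity $\chi_{\R^2\setminus W_\alpha}-\chi_{W_\alpha}-(\chi_{\R^2\setminus W_{\alpha'}}-\chi_{W_{\alpha'}})=2\chi_{W_{\alpha'}\setminus W_\alpha}$, together with the fact that $W_{\alpha'}\setminus W_\alpha$ has positive distance from $x_0$, yields the clean formula
\[
F(\alpha)-F(\alpha')=2\int_{W_{\alpha'}\setminus W_\alpha}\frac{dy}{|x_0-y|^{2+s}}>0 ,
\]
which bypasses the principal value in \eqref{fcurv} altogether. Hence $F$ is continuous and strictly decreasing on $(0,2\pi)$; moreover $F(\pi)=0$ (vanishing fractional curvature of a half-plane, by oddness of the kernel), $F(\alpha)\to+\infty$ as $\alpha\to0^+$ (the integral $2\int_{W_\pi\setminus W_\alpha}|x_0-y|^{-2-s}\,dy$ diverges as the domain fills a half-plane whose boundary passes through $x_0$), and $F(\alpha)\to-\infty$ as $\alpha\to 2\pi^-$. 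In particular, in the relations $c_iF(\alpha_i)=\lambda$ one must have $\lambda>0$: if $\lambda\le 0$ then $F(\alpha_i)\le 0$, hence $\alpha_i\ge\pi$ for every $i$, contradicting $\alpha_1+\alpha_2+\alpha_3=2\pi$. Thus $F$ restricts to a continuous strictly decreasing bijection of $(0,\pi)$ onto $(0,+\infty)$, and given $\lambda>0$ the angles are necessarily $\alpha_i=F^{-1}(\lambda/c_i)\in(0,\pi)$.

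Finally I would conclude by a one-dimensional intermediate value argument. The map $\lambda\mapsto\Phi(\lambda):=\sum_{i=1}^3 F^{-1}(\lambda/c_i)$ is continuous and strictly decreasing on $(0,+\infty)$, with $\Phi(\lambda)\to 3\pi$ as $\lambda\to0^+$ and $\Phi(\lambda)\to 0$ as $\lambda\to+\infty$. Since $2\pi\in(0,3\pi)$ there is a unique $\lambda^*>0$ with $\Phi(\lambda^*)=2\pi$, and setting $\alpha_i^*:=F^{-1}(\lambda^*/c_i)$ produces the unique triple of angles satisfying both $\alpha_1^*+\alpha_2^*+\alpha_3^*=2\pi$ and the stationarity relation; these angles depend only on the weights $c_1,c_2,c_3$ (and on $s$). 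This gives simultaneously the existence and the uniqueness of the stationary $3$-cone. I expect the only delicate point to be the study of $F$ — in particular its strict monotonicity and the blow-up $F(0^+)=+\infty$ — where the principal-value nature of the fractional curvature must be handled with care; the subtraction identity displayed above is precisely what reduces this to a routine computation.
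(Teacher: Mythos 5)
Your argument is correct and is essentially the paper's proof: both reduce stationarity of a $3$-cone to the system $c_1F(\alpha_1)=c_2F(\alpha_2)=c_3F(\alpha_3)$ for a strictly monotone function of the opening angle (using homogeneity to check the condition at a single point of each interface), and then get existence and uniqueness of the angles from monotonicity of the sum of the inverses in a scalar equation ($\sum_i F^{-1}(\lambda/c_i)=2\pi$ in your variables, $\sum_i F^{-1}(k/c_i)=\pi$ in the paper's, its $F$ being your $F(\pi-\cdot)$). The only difference is cosmetic: the paper expresses the curvature as an absolutely convergent integral over a symmetric sector via the reflection identity \eqref{c} of Lemma \ref{lemma3cones} and the explicit formula \eqref{falpha} with $F'>0$, whereas you bypass the principal value through the nested-wedge subtraction identity against the half-plane; the two devices are equivalent.
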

\begin{proof} The proof is analogous to that of Lemma \ref{lemma3cones}. 
The stationarity condition reads 
\begin{equation}\label{condnuova}c_i H_s(x, C_i) =c_jH_s(x, C_j) \qquad \forall x\in  \partial C_i\cap \partial C_j,\  x\neq 0,\end{equation}
and since $c_i>0$ for all $i$, we get $\alpha_i<\pi$.   

Proceeding as in \eqref{c} in the proof of Lemma \ref{lemma3cones}  and using the same notation, we  note that for all $\lambda>0$,  $\lambda((C_3\setminus \tilde B)-x)=(C_3\setminus \tilde B)-\lambda x$ and $\lambda((C_3\cup B)-x)=(C_3\cup B)-\lambda x$. Therefore 
$H_s(x, C_i)=\lambda^{s} H_s(\lambda x, C_i)$. This implies that it is sufficient to verify condition \eqref{condnuova} just  for one $x\neq 0$.
We fix from now on $x$, with $|x|=1$.

We introduce the function $F:[0, \pi)\to \R$ as 
\begin{equation}\label{falpha}
F(\alpha)= 2\int_0^\alpha \int_0^{+\infty} \frac{\rho}{(1+\rho^2+2\rho \cos\theta)^{1+s/2}}d\rho d\theta.
\end{equation}    
Note that if $K$ is a sector of the cone with opening angle $2\alpha$ and which is symmetric with respect to the half-line separating $C_1, C_2$,  then
$F(\alpha)= \int_{K}\frac{1}{|x-y|^{2+s}}dy$.  Note that $F(0)=0$ and 
\[F'(\alpha)= 2  \int_0^{+\infty} \frac{\rho}{(1+\rho^2+2\rho \cos\alpha)^{1+s/2}}d\rho>0.\] Therefore $F$ is  invertible. 

Recalling the definition of $F$ and \eqref{c}, we may  restate \eqref{condnuova} as 
\begin{equation}\label{cond2}c_2 F(\pi-\alpha_2) =c_1F(\pi-\alpha_1).\end{equation}

With the same argument we conclude that the cone $\mathcal{C}$ is stationary iff 
\begin{equation}\label{stat}c_2 F(\pi-\alpha_2) =c_1F(\pi-\alpha_1)=c_3F(\pi-\alpha_3).\end{equation}

Let $k>0$ be the solution to the equation
\[F^{-1}(k/c_1)+F^{-1}(k/c_2)+F^{-1}(k/c_3)=\pi,\] 
which exists and is unique due to the fact that $F^{-1}:[0, +\infty)\to \R$ is monotone increasing. 
Then  the angles $\alpha_i$ are uniquely determined as
\[\alpha_i= \pi-F^{-1}(k/c_i). \]

\end{proof} 
\begin{remark}\label{clas}
In the case of standard perimeter, it has been proved in \cite{l} that the unique $3$-cone which is a local minimizer for the functional $\sum_{1\leq i\leq 3}c_i \Per (E_i)$ has opening angles $\alpha_i$ which satisfies the following relation
\[\frac{\sin\alpha_1}{c_2+c_3}=\frac{\sin\alpha_2}{c_1+c_3}=\frac{\sin\alpha_3}{c_1+c_2}.\]
For general $k$-clusters, with $k>3$, in general there could be singular cones with more than $3$ phases which are local minimizers. 
 However, in \cite{dn} it is proved that  if the weights $c_i$ are sufficiently close to $1$, it is possible to recover  the triple-point property: Only $3$-cones are local minimizers.
\end{remark}
We get in this case the following  analogous of Theorem \ref{cone} for the case of $3$ cones. We state it in this form since for the functional  $\sum_{i}c_i \Per (E_i)$  it is not known if the unique local minimizers among cones are just  half-planes and the $3$-cone given in Remark \ref{clas}, see \cite{lf}. 
\begin{proposition} \label{cone2} 
There exists $s_0\in (0,1)$  depending on $(c_i)_i$ such that the following holds: 
Among all $2$-cones and $3$-cones,
 the unique local minimizers for $\mathcal{P}_{s,c}$, for $s>s_0$, are half-planes and the  $3$-cone obtained in Lemma \ref{lemma3cones2}. 
\end{proposition}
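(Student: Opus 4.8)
The plan is to follow the scheme of the proof of Theorem~\ref{cone}, with Lemma~\ref{lemma3cones2} in the role of Lemma~\ref{lemma3cones} and a weighted version of Proposition~\ref{dirdue} producing the singular minimal cone. I would split the argument into a \emph{classification} part (any $2$- or $3$-cone which is a local minimizer is on our list) and an \emph{existence} part (the cones on the list are indeed local minimizers for $s$ close to $1$).

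For the classification, let $\mathcal C$ be a $2$- or a $3$-cone which is a local minimizer for $\mathcal P_{s,c}$. If $\mathcal C$ is a $2$-cone, say with nonempty phases $C_i$ and $C_j=\R^2\setminus C_i$, then from \eqref{fractionalp} one checks directly that $\Per_s(C_i;\Omega)=\Per_s(C_j;\Omega)$, so on such clusters $\mathcal P_{s,c}$ coincides with $(c_i+c_j)\Per_s(C_i;\cdot)$; hence $\mathcal C$ is a local minimizer for $\Per_s$, and by \cite{adpm,crs} it is a half-plane. If $\mathcal C$ is a $3$-cone, then it is in particular stationary, so by Lemma~\ref{lemma3cones2} it must coincide with the unique stationary $3$-cone described there.

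For the existence part I first observe that every half-plane $H$, realised by any two phases $i\ne j$, is a local minimizer for every $s\in(0,1)$: for a competitor $\mathcal F$ coinciding with it outside a ball $B_R$ one has $F_i\Delta H\subseteq B_R$ and $F_j\Delta H^c\subseteq B_R$, so $\Per_s(F_i;B_R)\ge\Per_s(H;B_R)$ and $\Per_s(F_j;B_R)\ge\Per_s(H^c;B_R)=\Per_s(H;B_R)$ by \cite{adpm,crs}, while the remaining phases contribute nonnegatively, whence $\mathcal P_{s,c}(\mathcal F;B_R)\ge(c_i+c_j)\Per_s(H;B_R)$. For the stationary $3$-cone of Lemma~\ref{lemma3cones2} I would argue as in Proposition~\ref{dirdue}. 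Fix a bounded $C^1$ set $\Omega\ni0$ and an exterior datum $\bar{\mathcal E}=(\bar E_1,\bar E_2,\bar E_3)$ built from three half-planes, analogous to the one in Proposition~\ref{dirdue} but with the directions $n_i$ replaced by directions realising the opening angles of Remark~\ref{clas}, so that the classical minimizer $\mathcal E$ of $\mathcal P_c$ with datum $\bar{\mathcal E}$ is the $3$-cone with an interior triple junction; this minimizer exists and is unique because the edge weights $c_i+c_j$ satisfy the strict triangle inequalities $c_i+c_j<(c_i+c_k)+(c_j+c_k)$, so the associated weighted Fermat point of the three boundary traces is interior to $\Omega$. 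By Theorem~\ref{convthm}, applied to the weighted functional, the minimizers $\mathcal E_s$ of the Dirichlet problem \eqref{dir2} converge to $\mathcal E$ locally uniformly in $\Omega$ as $s\to1$; if along some sequence $s_n\to1$ the clusters $\mathcal E_{s_n}$ had empty singular set in $\Omega$, then the contradiction argument in the proof of Proposition~\ref{dirdue} (the interfaces $\partial E^{s_n}_i\cap\partial E^{s_n}_j$ split a small ball $B(0,r)$ into one large and several small connected components, forcing $|E^{s_n}_i\cap B(0,r)|\to0$ for some $i$, against the $L^1$-convergence) applies verbatim. Hence there is $s_0=s_0(c)\in(0,1)$ such that $\mathcal E_s$ has a singular point in $\Omega$ for every $s>s_0$.

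To conclude, fix $s>s_0$ and a singular point $x$ of $\mathcal E_s$. By Theorem~\ref{thmsing} a blow-up of $\mathcal E_s$ at $x$ is an $h$-cone $\mathcal C$ with $h\le3$, which is a local minimizer for $\mathcal P_{s,c}$ on $\R^2$ (the $\Lambda$-minimality of $\mathcal E_s$ rescales to exact minimality in the limit). Since $x$ is singular, $\mathcal C$ is not a half-plane, hence by the classification step it is not a $2$-cone, so $\mathcal C$ is a genuine $3$-cone and, by Lemma~\ref{lemma3cones2}, it is the stationary $3$-cone of that lemma; in particular that cone is a local minimizer for every $s>s_0$, which together with the classification step proves the statement. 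The step I expect to be the main obstacle is precisely this weighted version of Proposition~\ref{dirdue}: one must produce an exterior datum whose \emph{classical} minimizer is genuinely singular and then verify that the convergence-and-contradiction argument goes through, which rests on the density estimates of Theorem~\ref{density} being uniform as $s\to1$ for $\mathcal P_{s,c}$ and on the Hausdorff convergence of minimizers from Theorem~\ref{convthm}; since all these ingredients are already available, the only real care is needed in the geometry of the boundary datum ensuring an interior triple point.
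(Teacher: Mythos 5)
Your proposal is correct in substance and shares the paper's key mechanism for existence (a weighted analogue of Proposition~\ref{dirdue} built on the angles of Remark~\ref{clas}, Theorem~\ref{convthm}, and a blow-up at a singular point, exactly as the paper's ``arguing as in Proposition \ref{dirdue}, and recalling Remark \ref{clas}''), but your classification step takes a genuinely different route. The paper classifies minimal $2$- and $3$-cones for $s$ near $1$ by compactness: a sequence of minimal cones for $s_n\to1$ converges locally uniformly (via Theorem~\ref{teo:gamma} and the density estimates) to a classical minimal cone, which in the restricted class is a half-plane or the cone of Remark~\ref{clas}, and only then invokes stationarity and Lemma~\ref{lemma3cones2}. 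You instead classify directly for \emph{every} $s\in(0,1)$: a $2$-cone local minimizer reduces, by the symmetry $\Per_s(E;\Omega)=\Per_s(\R^2\setminus E;\Omega)$, to a single-phase local minimizer of $\Per_s$ and hence is a half-plane by \cite{adpm,crs}; a $3$-cone local minimizer is stationary and hence is the cone of Lemma~\ref{lemma3cones2}. This is more elementary (no $\Gamma$-convergence is needed for the classification), confines the role of $s_0$ to the existence part, and yields a slightly stronger statement; you also supply an explicit proof that half-planes are minimizers among clusters, which the paper leaves to the cited $k=2$ result. The one caveat is that your direct argument takes Lemma~\ref{lemma3cones2} at face value for \emph{all} $3$-cones, while its proof (like that of Lemma~\ref{lemma3cones}) treats cones bounded by three half-lines, i.e.\ with each phase a single sector; in the paper, the locally uniform convergence to the classical cone, together with the uniform density estimates, is what implicitly rules out $3$-cones with disconnected phases, so if you drop the compactness step you should add a line excluding such configurations (e.g.\ via the density estimates of Theorem~\ref{density} applied to the cone at large scales, or by extending the stationarity computation to phases made of several sectors). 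With that point addressed, your argument stands, and the remaining care you flag — choosing the exterior datum so that the classical weighted Steiner minimizer has an interior triple junction, which your strict triangle-inequality observation for the edge weights $c_i+c_j$ justifies — is indeed the same verification the paper's proof relies on.
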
 

\begin{proof} Arguing as in the proof of Theorem \ref{cone}, we consider  $s_n\to 1$ and $\mathcal{C}_n$ to be a sequence of minimal cones for 
$\sum_{i=1}^3 c_i \Per_{s_n}(\cdot)$.
By Theorem \ref{teo:gamma} there exists a minimal cone $\mathcal C$ for $\sum_{i=1}^3 c_i \Per(\cdot)$ 
 such that 
$\mathcal C_n\to \mathcal C$ locally uniformly as $n\to \infty$. Since the only minimal cones in $\R^2$ are half-planes
or $3$-cones with angles  given in Remark \ref{clas}, it follows by the uniform convergence
 that also the $\mathcal C_n$'s are a half-planes or $3$-cones for $n$ large enough.
 By Lemma \ref{lemma3cones2}, if $\mathcal C_n$ is a minimal $3$-cone then necessarily 
it coincides with the $3$-cone computed in the Lemma.  
Arguing as in Proposition \ref{dirdue}, and recalling Remark \ref{clas},  we get  that there exist minimal cones which are not half-planes,
 and this concludes the proof.
\end{proof}

\bibliography{fractional}
\bibliographystyle{abbrv}
\end{document}